\def\<{\langle}
\def\>{\rangle}
\def\e{\eps}
\def\Chi{\raise .3ex
\hbox{\large $\chi$}} 
\newcommand{\norme}[1]{ {\big\lVert  #1\big\rVert}}
\newcommand{\ve}{\varepsilon}
\def\({\Bigl (}
\def\){\Bigr )}
\newcommand{\be}{\begin{equation}}
\newcommand{\p}{\partial}
\newcommand{\f}{\frac}
\newcommand{\ee}{\end{equation}}
\newcommand{\bea}{$$ \begin{array}{lll}}
\newcommand{\eea}{\end{array} $$}
\newcommand{\bi}{\begin{itemize}}
\newcommand{\ei}{\end{itemize}}
\numberwithin{equation}{section}
\newtheorem{prop}{Proposition}
\newtheorem{theorem}{Theorem}
\newtheorem{lemma}{Lemma}
\newtheorem{assumption}{Assumption}
\newtheorem{remark}{Remark}
\DeclareMathOperator{\E}{{\mathbb E}}
\DeclareMathOperator{\R}{{\mathbb R}}
\DeclareMathOperator{\PP}{{\mathbb P}}
\DeclareMathOperator{\argmin}{argmin}
\DeclareMathOperator{\HH}{{\mathcal H}}
\DeclareMathOperator{\Var}{Var} 
\providecommand{\eps}{\varepsilon}
\renewcommand{\cdot}{{\scriptstyle \bullet} }
\renewenvironment{proof}{\noindent{\bf Proof.}}{\hfill
  $\blacksquare$\par\noindent}
\begin{document}
\title{Nonparametric estimation of the division rate of a size-structured population}
\author{M. Doumic\footnote{INRIA Rocquencourt, projet BANG, Domaine de Volceau, BP 105, 781153 Rocquencourt, France. {\it email}: marie.doumic-jauffret@inria.fr}, M. Hoffmann\footnote{ENSAE-CREST and CNRS-UMR 8050, 3, avenue Pierre Larousse, 92245 Malakoff Cedex, France. {\it email}: marc.hoffmann@ensae.fr}, P. Reynaud-Bouret\footnote{CNRS-UMR 6621 and Universit\'e de Nice Sophia-Antipolis, Laboratoire J-A Dieudonn\'e, Parc
Valrose, 06108 Nice
cedex 02, France. {\it email}: patricia.reynaud-bouret@unice.fr} \;and V. Rivoirard\footnote{CEREMADE, CNRS-UMR 7534, Universit\'e Paris Dauphine, Place Mar\'echal de Lattre de Tassigny, 75775 Paris Cedex 16, France. INRIA Paris-Rocquencourt, projet Classic. {\it email}: Vincent.Rivoirard@dauphine.fr}}

\maketitle

\begin{abstract}
We consider the problem of estimating  the division rate of a size-structured population in a nonparametric setting. The size of the system evolves according to a transport-fragmentation equation: each individual grows with a given transport rate, and splits into two offsprings of the same size, following a binary fragmentation process with unknown division rate that depends on its size. In contrast to a deterministic inverse problem approach, as in \cite{PZ, DPZ}, we take in this paper the perspective of statistical inference: our data consists in a large sample of the size of individuals, when the evolution of the system is close to its time-asymptotic behavior, so that it can be related to the eigenproblem of the considered transport-fragmentation equation (see \cite{PR} for instance).  By estimating statistically each term of the eigenvalue problem and by suitably inverting a certain linear operator (see \cite{DPZ}), we are able to construct a more realistic estimator of the division rate that achieves the same optimal error bound as in related deterministic inverse problems. Our procedure relies on kernel methods with automatic bandwidth selection. It is inspired by model selection and recent results of Goldenschluger and Lepski \cite{LepsP,LepsS}. 
\end{abstract}

\noindent {\bf Keywords:} Lepski method, Oracle inequalities, Adaptation,  Aggregation-fragmentation equations, Statistical inverse problems, Nonparametric density estimation, Cell-division equation.\\
\noindent {\bf Mathematical Subject Classification:} 35A05, 35B40, 45C05, 45K05, 82D60, 92D25, 62G05, 62G20 \\

\tableofcontents
\newpage
\section{Introduction}
\subsection{Motivation}
Structured models have long served as a representative deterministic model used to describe the evolution of biological systems, see for instance \cite{Pe} or \cite{MD} and references therein. In their simplest form, structured models  describe the temporal evolution of a population structured by a biological parameter such as size, age or any significant \emph{trait}, by means of an evolution law, which is a mass balance at the  macroscopic scale. 
A paradigmatic example is given by the transport-fragmentation equation in cell division, that reads
\begin{equation} \label{equation de transport-fragmentation}
\left\{
\begin{array}{l}
\displaystyle \frac{\partial}{\partial t} n(t, x) + \frac{\partial}{\partial x}\big(g_0(x)n(t,x)\big) + B(x)n(t,x) = 4B(2x)n(t,2x),\;\;t \geq 0,\;\;x \geq 0, \\ \\
g n(t,x=0) = 0,\;\; t>0,\\ \\
n(t=0,x) = n^{0}(x),\;\;x \geq 0.
\end{array}
\right.
\end{equation}

The mechanism captured by Equation \eqref{equation de transport-fragmentation} can be described as a mass balance equation (see \cite{Banks1,MD}): the quantity of cells $n(t,x)$ of size $x$ at time $t$ is fed by a transport term $g_0(x)$ that accounts for growth by nutrient uptake, and each cell can split into two offsprings of the same size according to a division rate $B(x)$.  Supposing $g_0(x)=\kappa g(x),$ where we suppose a given model for the growth rate $g(x)$ known up to a multiplicative constant $\kappa>0,$ and experimental data for $n(t,x),$ 
the problem we consider here is to recover the division rate $B(x)$ and the constant $\kappa$. 

In \cite{PZ}, Perthame and Zubelli proposed a deterministic method based on  the asymptotic behavior of the cell amount $n(t,x):$ indeed, it is known (see \emph{e.g.} \cite{MMP,PR}) that under suitable assumptions on $g$ and $B$, by the use of the \emph{general relative entropy principle} (see \cite{Pe}), one has 
\begin{equation} \label{entropy}
\int_0^\infty \bigl|n(t,x)e^{-\lambda t}-\langle n^0,\phi\rangle{N}(x)\bigr|\phi(x)\,dx\underset{t\to\infty}{\longrightarrow}0
\end{equation}
where $\langle n^0,\phi\rangle=\int n^0(y)\phi(y)dy$ and $\phi$ is the adjoint eigenvector (see \cite{MMP}). The density $N$ is the first eigenvector,  and $(\lambda,N)$ the unique solution of the following eigenvalue problem
\begin{equation} \label{fundamental equation}
\left\{
\begin{array}{l}
\displaystyle \kappa\frac{\partial}{\partial x}\big(g(x)N(x)\big)+\lambda N(x)=4BN(2x) - BN(x),\;\;x>0,\\ \\
B(0)N(0)=0,\qquad \int N(x) dx =1,\qquad N(x)\geq 0,\qquad \lambda>0.
\end{array}
\right.
\end{equation}
Moreover, under some supplementary conditions, this convergence occurs exponentially fast (see \cite{PR}).
Hence, in the rest of this article, we work under the following analytical assumptions.
\begin{assumption} [Analytical assumptions]\label{as:an} 
\
\begin{enumerate}
\item For the considered nonnegative functions $g$ and  $B$ and for $\kappa>0$, there exists a unique eigenpair $(\lambda,\,N)$  solution of Problem \eqref{fundamental equation}. \label{as:an:1}
\item This solution satisfies, for all $p\geq 0,$ $ \int x^p N(x) dx <\infty$ and $0<\int g(x) N(x) dx <\infty$. \label{as:an:2}
\item The functions $N$ and $gN$ belong to $ \mathcal{W}^{s+1}$ with $s\geq 1$, and in particular $\norme{N}_\infty<\infty$ and $\norme{(gN)'}_2<\infty$. ($\mathcal{W}^{s+1}$ denotes the Sobolev space of regularity $s+1$ measured in $\mathbb{L}^2$-norm.)  \label{as:an:3}
\item We have $g\in \mathbb{L}^\infty (\R_+)$ with $\R_+=[0,\infty)$. \label{as:an:4} 
\end{enumerate}
\end{assumption}
Hereafter $\|\cdot\|_2$ and $\|\cdot\|_\infty$ denote the usual $\mathbb{L}^2$ and $\mathbb{L}^\infty$ norms on $\R_+$. Assertions \ref{as:an:1} and \ref{as:an:2} are true under the assumptions on $g$ and $B$ stated in Theorem~1.1 of \cite{DG}, under which we also have $N\in \mathbb{L}^\infty.$ Assertion  \ref{as:an:3} is a (presumably reasonable) regularity assumption, necessary to obtain rates of convergence together with the convergence of the numerical scheme. Assertion \ref{as:an:4} is restrictive, but mandatory in order to apply our statistical approach.

Thanks to this asymptotic behavior provided by the entropy principle \eqref{entropy}, instead of requiring  time-dependent data $n(t,x),$ which is experimentally less precise and more difficult to obtain, the inverse problem becomes: How to recover $(\kappa,B)$ from observations on $(\lambda,N)$ ? 
In \cite{PZ, DPZ}, as generally done in deterministic inverse problems (see \cite{Engl}), it was supposed that experimental data were pre-processed into an approximation $N_\varepsilon$ of $N$ with an {\it a priori} estimate of the form 
$\|N-N_\varepsilon\| \leq \varepsilon$ for a suitable norm $\|\cdot\|$. Then, recovering $B$ from $N_\varepsilon$ becomes an inverse problem with a certain degree of ill-posedness. From a modelling point of view, this approach suffers from the limitation that knowledge on $N$ is postulated in an abstract and somewhat arbitrary sense, that is not genuinely related to experimental measurements.

\subsection{The statistical approach}

In this paper, we propose to overcome the limitation of the deterministic inverse problems approach by assuming that we have $n$ data, each data being obtained from the measurement of an individual cell picked at random, after the system has evolved for a long time so that the approximation $n(t,x) \approx N(x)e^{\lambda t}$ is valid. This is actually what happens if one observes cell cultures in laboratory after a few hours, a typical situation for  {\it E. Coli} cultures for instance, provided, of course, that the underlying aggregation-fragmentation equation is valid. 

Each data is viewed as the outcome of a random variable $X_i$, each $X_i$ having probability distribution 
$N(x)dx$. 
We thus observe $(X_1,\ldots, X_n),$
with
$$\PP(X_1\in dx_1,\ldots, X_n \in dx_n) = \prod_{i = 1}^nN(x_i)dx_i,$$
and where $\PP(\cdot)$ hereafter denotes probability\footnote{In the sequel, we denote by $\E(\cdot)$ the expectation operator with respect to $\PP(\cdot)$ likewise.}. We assume for simplicity that the random variables $X_i$ are defined on a common probability space $(\Omega, {\mathcal F}, \PP)$ and that they are stochastically independent. Our aim is to build an estimator of $B(x)$, that is a function $x \leadsto \hat B_n(x,X_1,\ldots, X_n)$ that approximates the true $B(x)$  with optimal accuracy and nonasymptotic estimates.  To that end, consider the operator
\begin{equation} \label{def operateur tau}
(\lambda,N) \leadsto \mathfrak{T}(\lambda,N)(x):=\kappa\frac{\partial}{\partial x}\big(g(x)N(x)\big)+\lambda N(x),\;\;x\geq 0.
\end{equation}
From representation \eqref{fundamental equation}, we wish to find $B$, solution to
$\mathfrak{T}(\lambda,N) = {\mathcal L}(BN),$
where 
\begin{equation} \label{def L}
{\mathcal L}\big(\varphi\big)(x):=4\varphi(2x)-\varphi(x),
\end{equation}
based on statistical knowledge of $(\lambda, N)$ only.  Suppose that we have  preliminary estimators $\hat L$ and $\hat N$ of respectively $\mathfrak{T}(\lambda,N)$ and $N,$ and an approximation ${\cal L}^{-1}_k$ of ${\cal L}^{-1}$. Then we can reconstruct $B$ in principle by setting formally $$\widehat B:= \frac{{\mathcal L}^{-1}_k(\hat L)}{\widehat N}.$$
This leads us to distinguish three steps that we briefly describe here.  The whole method is fully detailed in Section \ref{proposed-section}.

\

The first and principal step is to find an optimal estimator $\hat L$  for $\mathfrak{T}(\lambda,N).$ To do so, the main part consists in applying twice the Goldenschluger and Lepski's method \cite{LepsS} (GL for short).  This method is a new version of the classical Lepski method \cite{L1, L2, L3, L4}. Both methods are adaptive to the regularity of the unknown signal and the GL method furthermore provides with an oracle inequality. For the unfamiliar reader, we discuss {\it adaptive properties} later on, and explain in details the {\it GL method} and {\it the oracle point of view}  in Section~2. 
\begin{enumerate}
\item First, we estimate the density $N$ by a kernel method, based on a kernel function $K$. We define  $\hat N=\hat N_ {\hat h}$ where $\hat N_h$ is defined by \eqref{def:hatNh} and the bandwidth $\hat h$ is selected automatically by \eqref{selectN} from a properly-chosen set $\cal H.$ (see Section \ref{sec:N:Lepski} for more details). A so-called oracle inequality is obtained in Proposition \ref{estN} measuring the quality of estimation of $N$ by $\hat N$. Notice that this result, which is just a simplified version of \cite{LepsS},  is valid for estimating any density,  since we have only assumed to observe an $n-$sample of $N,$ so that this result 
can be considered \emph{per se.} 
\item Second, we estimate the density derivative (up to $g$) $D=\f{\p}{\p x} (gN)$, again by a kernel method with the same kernel $K$ as before, and select an optimal bandwidth $\tilde{h}$  given by Formula \eqref{def:tildeh} similarly. This defines an estimator $\hat D:=\hat D_{\tilde h}$ where $\hat{D}_h$ is specified by \eqref{def:hatDh}, and yields an oracle inequality for $\hat D$ stated in Proposition \ref{estD}. In the saemway as for $N,$ this result has an interest \emph{per se} and is not a direct consequence of \cite{LepsS}.  
\end{enumerate}
From there, it only remains to find estimators of $\lambda$ and $\kappa$. To that end, we make the following \emph{a priori} (but presumably reasonable) Assumption \ref{HypLa} on the existence of an estimator $\hat\lambda_n$ of $\lambda.$ 
\begin{assumption}[Assumption on $\hat\lambda_n$]\label{HypLa}
There exists some $q>1$ such that
$$\eps_{\lambda,n}  =\big( \E[|\hat\lambda_n-\lambda|^{q}]\big)^{1/q} <\infty, \qquad R_{\lambda,n}  = \E[\hat\lambda_n^{2q}]<\infty.$$
\end{assumption}
Indeed, in practical cell culture experiments, one can track $n$ individual cells that have been picked at random through time. By looking at their evolution, it is possible to infer $\lambda$ in a classical parametric way, via an estimator $\hat{\lambda}_n$ that we shall assume to possess from now on\footnote{Mathematically sepaking, this only amounts to enlarge the probability space to a rich enough structure that captures this estimator. We do not pursue that here.}. 
Based on the following simple equality
\begin{equation}\label{rho}
\kappa = \lambda \rho_g(N) \mbox{ where }  \rho_g(N) = \frac{\int_{\R_+}xN(x)dx}{\int_{\R_+}g(x)N(x)dx},
\end{equation}
obtained by multiplying \eqref{fundamental equation} by $x$ and integrating by part, we then define an estimator $\hat\kappa_n$ by \eqref{def:hatkappan}. Finally, defining $\hat L= \hat\kappa_n \hat D + \hat\lambda_n \hat N$ ends this first step. The second step consists in the formal inversion of ${\mathcal L}$ and its numerical approximation: For this purpose, we follow the method proposed in \cite{DPZ} and recalled in Section \ref{sec:inversion}. To estimate $H:=BN$, we state 
\begin{equation}\label{def:hatH}
\hat H:= {\mathcal L}_k^{-1} (\hat L)
\end{equation}
where ${\mathcal L}_k^{-1}$ is defined by \eqref{def algo} on a given interval $[0,T].$ A new approximation result between ${\mathcal L}^{-1}$ and ${\mathcal L}_k^{-1}$ is given by Proposition~\ref{stabiliteL2}. The third and final step consists in setting
$\hat B:=\f{\hat H}{\hat N},$ 
clipping this estimator in order to avoid explosion when $N$ becomes too small, finally obtaining
\begin{equation}\label{def:Btilde}
\tilde B(x) := \max(\min( \hat{B}(x), \sqrt{n}), -\sqrt{n}).
\end{equation}
\subsection{Rates of convergence}
 Because of the approximated inversion of $\mathcal{L}$ on $[0,T]$, we will have access to error bounds only on $[0,T]$. We set $\norme{f}_{2,T}^2=\int_0^T f^2(x)dx.$ for the $\mathbb{L}^2$-norm restricted to the interval $[0,T]$.
If the fundamental (yet technical) statistical result is the oracle inequality for $\hat H$ stated in Theorem \ref{oraclesurH} (see Section \ref{oracle-section}), the relevant part with respect to existing works in the non-stochastic setting \cite{DPZ,PZ} is its consequence in terms of rates of convergence. For presenting them, we need to assume that the kernel $K$ has regularity and vanishing moments properties.

\begin{assumption}[Assumptions on $K$]\label{HypK2}
The kernel $K$ is differentiable with derivative $K'$. Furthermore,  $\int K(x) dx =1$ and  $\norme{K}_2$ and $\norme{K'}_2$ are finite.
Finally, there exists a positive integer $m_0$ such that $\int K(x)x^pdx=0$ for $p=1,\ldots,m_0-1$ and $I(m_0):=\int |x|^{m_0}K(x)dx$ is finite.
\end{assumption}
Then our proposed estimators satisfy the following properties.

\begin{prop}\label{rate}
Under Assumptions \ref{as:an}, \ref{HypLa} and \ref{HypK2}, let us assume that $R_{\lambda,n}$ and $\sqrt{n} \epsilon_{\lambda,n}$ are bounded uniformly in $n$ and specify ${\cal L}^{-1}_k$  with $k=n$ f. Assume further that the family of bandwidth $\mathcal{H}=\tilde{\mathcal{H}}=\{D^{-1}: \ D=D_{\min},..., D_{\max}\}$ depends on $n$ is such that $1\leq D_{\min}\leq n^{1/(2m_0+1)}$ and $n^{1/5}\leq D_{\max} \leq n^{1/2}$ for all $n$. Then $\hat{H}$ satisfies, for all $s\in[1;m_0-1]$
\begin{equation}\label{0}
\E\big[\norme{\hat H -H}_{2,T}^q\big]=O\big(n^{-\frac{qs}{2s+3}}\big),
\end{equation}
Furthermore, if the kernel K is Lipschitz-regular, if there exists an interval $[a,b]$ in $(0,T)$ such that 
$$[m, M]  :=[\inf_{x\in[a,b]} N(x), \sup_{x\in [a,b]} N(x)] \subset (0,\infty), \qquad Q  :=\sup_{x\in [a,b]} |H(x)|<\infty,$$
and if $\ln(n)\leq D_{\min}\leq n^{1/(2m_0+1)}$ and $n^{1/5}\leq D_{\max} \leq (n/\ln(n))^{1/(4+\eta)}$ for some $\eta>0$, then $\hat{B}$ satisfies, for all $s\in[1,m_0-1]$,
\begin{equation}\label{1}
\E\big[\norme{(\tilde B-B)1_{[a,b]}}^q_{2}\big]=O\big(n^{-\frac{qs}{2s+3}}\big).
\end{equation}
\end{prop}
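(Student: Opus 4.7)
The plan has two steps: first derive~\eqref{0} from the oracle inequality of Theorem~\ref{oraclesurH} by specialising the bandwidths, then transfer the rate to $\tilde B$ by a multiplicative decomposition stabilised by a concentration estimate for $\hat N$ on $[a,b]$.

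\emph{Step 1 (rate for $\hat H$).} Theorem~\ref{oraclesurH} bounds $\E[\|\hat H - H\|_{2,T}^q]$, up to constants, by the infimum over $(h,\tilde h)\in\mathcal H\times\tilde{\mathcal H}$ of the bias-plus-variance terms for $\hat N_h$ and $\hat D_{\tilde h}$, plus the approximation error $\|\mathcal L_n^{-1} - \mathcal L^{-1}\|$ from Proposition~\ref{stabiliteL2} and the parametric errors on $(\hat\lambda_n, \hat\kappa_n)$, both $O(n^{-1/2})$ by Assumption~\ref{HypLa} and formula~\eqref{rho}. Under Assumption~\ref{as:an}.3 one has $D=(gN)'\in\mathcal W^{s}$, so the bias of $\hat D_{\tilde h}$ is of order $\tilde h^{\,s}$ and its variance of order $(n\tilde h^{3})^{-1/2}$; balancing them at $\tilde h\asymp n^{-1/(2s+3)}$ gives the rate $n^{-s/(2s+3)}$, which dominates both the faster rate $n^{-(s+1)/(2s+3)}$ of $\hat N_h$ and the $O(n^{-1/2})$ parametric contributions. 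The grid assumptions $1\leq D_{\min}\leq n^{1/(2m_0+1)}$ and $n^{1/5}\leq D_{\max}\leq n^{1/2}$ ensure that $\tilde{\mathcal H}$ contains a point within a constant factor of this oracle bandwidth, and the oracle inequality then delivers~\eqref{0}.

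\emph{Step 2 (rate for $\tilde B$ on the good event).} Let $\Omega_0=\{\inf_{x\in[a,b]}\hat N(x)\geq m/2\}$. On $\Omega_0$, the identity
\[
\hat B - B \;=\; \frac{\hat H - H}{\hat N}\;+\;H\,\frac{N-\hat N}{N\,\hat N}
\]
together with $|B|\leq Q/m$ on $[a,b]$ and the non-expansiveness of the clipping (valid once $Q/m\leq\sqrt n$) yields
\[
\|(\tilde B - B)\,1_{[a,b]}\|_2 \;\leq\; \tfrac{2}{m}\,\|\hat H - H\|_{2,T}\;+\;\tfrac{2Q}{m^{2}}\,\|(\hat N - N)\,1_{[a,b]}\|_2 .
\]
Taking $q$-th moments and combining~\eqref{0} with the $O(n^{-q(s+1)/(2s+3)})$ rate for $\hat N$ inherited from Proposition~\ref{estN} gives the announced bound on $\Omega_0$. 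On $\Omega_0^c$, the crude estimate $|\tilde B - B|\leq \sqrt n + Q/m$ produces an overall contribution bounded by $C\,n^{q/2}\,\PP(\Omega_0^c)$, so the proof reduces to showing that $\PP(\Omega_0^c)$ decays faster than any polynomial in $n$.

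\emph{Main obstacle.} The technical heart is exactly this control of $\PP(\Omega_0^c)$, uniformly in the data-driven bandwidth $\hat h$. The plan is a union bound over the grid $\mathcal H$ (of polynomial cardinality in $n$) combined, for each fixed $h\in\mathcal H$, with a Talagrand-type concentration inequality for the empirical process $\sup_{x\in[a,b]}|\hat N_h(x)-\E\hat N_h(x)|$; its metric entropy is controlled by the Lipschitz regularity of~$K$. The conditions $\ln n\leq D_{\min}$ and $D_{\max}\leq (n/\ln n)^{1/(4+\eta)}$ guarantee $nh\gg\ln n$ uniformly on the grid, giving a stretched-exponential tail on $\PP(\Omega_0^c)$ that absorbs the $n^{q/2}$ prefactor; the same lower bound makes the deterministic bias $|\E\hat N_h - N|$ on $[a,b]$ small enough so that the threshold $m/2$ is crossed only through the stochastic part. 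Combining the two cases delivers~\eqref{1}.
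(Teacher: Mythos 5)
Your proposal is correct and follows essentially the same route as the paper: \eqref{0} is obtained by plugging the bias bound $\|K_h\star f-f\|_2\lesssim \|f\|_{\mathcal W^s}h^s$ into Theorem~\ref{oraclesurH} and balancing at $h\asymp n^{-1/(2s+3)}$, and \eqref{1} is obtained via the same good event $\{2\hat N\geq m \text{ on } [a,b]\}$, the same quotient decomposition, a crude $n^{q/2}$ bound on the complement, and a grid-uniform sup-norm concentration bound for $\hat N_h$ (the paper's Lemma~\ref{bar}, which uses Bernstein plus a chaining inequality exploiting the Lipschitz regularity of $K$). The only minor discrepancy is that you do not need, and the paper does not prove, super-polynomial decay of $\PP(\Omega_0^c)$: the bound $\PP(\Omega_n^c)=O(n^{-q})$ already suffices to absorb the $n^{q/2}$ prefactor, and the relevant smallness condition on the grid is $nh^{4+\eta}\gtrsim\ln n$ (coming from the $h^{-2}$ scaling of the Lipschitz constant of $K_h$), not merely $nh\gg\ln n$.
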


\subsection{Remarks and comparison to other works}
1) Let us  establish formal correspondences between the methodology and results when recovering $B$ from (\ref{fundamental equation}) from the point of view of statistics or PDE analysis. <
After renormalization,  we obtain the rate $n^{-s/(2s+3)}$ for estimating $B$, and this corresponds to ill-posed inverse problems of order $1$ in nonparametric statistics. We can make a parallel with additive deterministic noise following Nussbaum and Pereverzev \cite{NP} (see also \cite{MR} and the references therein). Suppose we have an approximate knowledge of $N$ and $\lambda$ up to deterministic errors $\zeta_1 \in \mathbb{L}^2$ and $\zeta_2 \in \R$ with noise level $\varepsilon>0$: we observe
\begin{equation} \label{det error model}
N_\varepsilon=N+\varepsilon \zeta_1,\;\;\|\zeta_1\|_{2}\leq 1,
\end{equation}
and $\lambda_\varepsilon = \lambda + \varepsilon \zeta_2,\;\;|\zeta_2|\leq 1$. From the representation
$$B = \frac{{\mathcal L}^{-1}\mathfrak{T}(N,\lambda)}{N},$$
where $\mathfrak{T}(N,\lambda)$ is defined in \eqref{def operateur tau}, we have that the recovery of $\mathfrak{T}(N,\lambda)$ is ill-posed in the terminology of Wahba \cite{W} for it involves the computation of the derivative of $N$. Since ${\mathcal L}$ is bounded with an inverse bounded in $\mathbb{L}^2$ and the dependence in $\lambda$ is continuous, the overall inversion problem is ill-posed of degree $a=1$. By classical inverse problem theory for linear cases\footnote{although here the problem is nonlinear, but that will not affect the argument.}, this means that if $N \in \mathcal{W}^s$, the optimal recovery rate in $\mathbb{L}^2$-error norm should be $\varepsilon^{s/(s+a)} = \varepsilon^{s/(s+1)}$ (see also the work of Doumic, Perthame and collaborators \cite{PZ, DPZ}).

Suppose now that we replace the deterministic noise $\zeta_1$ by a random Gaussian {\it white noise}: we observe
\begin{equation} \label{stochastic error model}
N_\varepsilon = N+\varepsilon \mathbb{B}
\end{equation}
where $\mathbb{B}$ is a Gaussian white noise, {\it i.e.} a random distribution in $\mathcal{W}^{-1/2}$ that operates on test functions $\varphi \in \mathbb{L}^2$ and such that $\mathbb{B}(\varphi)$ is a centered Gaussian variable with variance $\|\varphi\|_{2}^2$. Model \eqref{stochastic error model} serves as a representative toy model for most stochastic error models such as density estimation or signal recovery in the presence of noise. Let us formally introduce the $\alpha$-fold integration operator $\mathcal{I}^\alpha$ and the derivation operator $\partial$.
We can rewrite \eqref{stochastic error model} as
$$N_\varepsilon = \mathcal{I}^1 (\partial N) + \varepsilon \mathbb{B}$$
and applying $\mathcal {I}^{1/2}$ to both side, we (still formally) equivalently observe
$$Z_\varepsilon:=\mathcal{I}^{1/2}N_\varepsilon = \mathcal{I}^{3/2} (\partial N)+\varepsilon \mathcal{I}^{1/2}\mathbb{B}.$$
We are back to a deterministic setting, since in this representation, we have that the noise  $\varepsilon \mathcal{I}^{1/2}\mathbb{B}$ is in $\mathbb{L}^2$. In order to recover $\partial N$ from $Z_\varepsilon$, we have to invert the operator ${\mathcal I}^{3/2}$, which has degree of ill-posedness $3/2$. We thus obtain the rate
$$\varepsilon^{s/(s+3/2)} = \varepsilon^{2s/(2s+3)}=n^{-s/(2s+3)}$$
for the calibration $\varepsilon = n^{-1/2}$ dictated by \eqref{stochastic error model} when we compare our statistical model with the deterministic perturbation (see for instance \cite{N} for establishing formally the correspondence $\varepsilon = n^{-1/2}$ is a general setting). This is exactly the rate we find in Proposition \ref{rate}: the deterministic error model and the statistical error model coincide to that extent\footnote{ The statistician reader willl note that the rate $n^{-s/(2s+3)}$ is also the minimax rate of convergence when estimating the derivative of a density, see \cite{GM}.}.

2) The estimators $\hat{H}$ and $\hat{B}$ do not need the exact knowledge of $s$ as an input  to recover this optimal rate of convergence. We just need to know an upper bound $m_0-1$ to choose the regularity of the kernel $K$. This capacity to obtain the optimal rate without knowing the precise regularity  is known in statistics as adaptivity in the minimax sense (see \cite{tsy} for instance for more details). It is close in spirit to what the discrepancy principle can do in deterministic inverse problems \cite{Engl}. However, in the deterministic framework, one needs to know the level of noise $\varepsilon$, which is not  realistic in practice. In our statistical framework, this level of noise is linked to the size sample $n$ through the correspondence $\varepsilon = n^{-1/2}$. 

3) Finally, note that the rate is polynomial and no extra-logarithmic terms appear, as it is often the case when adaptive estimation is considered (see \cite{L1, L2, L3, L4}).

\medskip

The next section explains in more details the GL approach and presents our estimators to a full extent, including the fundamental oracle inequalities. It also elaborates on the methodology related to oracle inequality. The main advantage of oracle inequalities is that they hold nonasymptotically (in $n$) and that they guarantee an optimal choice of bandwidth with respect to the selected risk. Section \ref{simulations} is devoted to numerical simulations that illustrate the performance of our method. Proofs are delayed until Section \ref{proofs-section}.


\section{Construction and properties of the estimators}\label{proposed-section}
\subsection{Estimation of $N$ by the GL method}\label{sec:N:Lepski}
We first construct an estimator of $N$. A natural approach is a kernel method, which is all the more appropriate for comparisons with analytical methods (see \cite{DPZ} for the deterministic analogue). The kernel function $K$ should satisfy the following assumption, in force in the sequel.
\begin{assumption}[Assumption on the kernel density estimator] \label{HypK}~
$K:\R\to \R$ is a continuous function such that  $\int K(x)dx =1$ and $\int K^2(x)dx <\infty$.
\end{assumption}
For $h>0$ and $x \in \R$, define
\begin{equation}\label{def:hatNh}
 \hat N_h(x):=\frac{1}{n}\sum_{i=1}^nK_h(x-X_i), 
\end{equation}
where $K_h(x)=h^{-1} K(h^{-1}x).$ Note in particular that $\E(\hat{N}_h)=K_h\star N$, where $\star$ denotes convolution. We measure the performance of $\hat{N}_h$ via its squared integrated error, {\it i.e.} the average $\mathbb{L}^2$ distance between $N$ and $\hat{N}_h$. It is easy to see that
$$\E[\|N-\hat{N}_h\|_2]\leq \norme{N-K_h\star N}_2 + \E[\|K_h\star N - \hat{N}_h\|_2],$$
with
\begin{eqnarray*}
 \E[\|K_h\star N - \hat{N}_h\|_2^2]& =& \frac{1}{n^2} \E\big[\int  \Big[\sum_{i=1}^n \Big(K_h(x-X_i)-\E\big(K_h(x-X_i)\big)\Big)\Big]^2 dx \big]\\
 &=& \frac{1}{n^2} \int  \sum_{i=1}^n \E\Big[\Big(K_h(x-X_i)-\E\big(K_h(x-X_i)\big)\Big)^2\Big] dx\\  &\leq& \frac{1}{n} \E \big[\int K_h^2(x-X_1)dx\big] = \frac{\norme{K_h}_2^2}{n}=\frac{\norme{K}_2^2}{nh}.
 \end{eqnarray*}
Applying the Cauchy-Schwarz inequality, we obtain
 $$\E[\|N-\hat{N}_h\|_2]\leq \|N-K_h\star N\|_2 + \frac{1}{\sqrt{nh}} \norme{K}_2.$$
The first term corresponds to a bias term, it decreases when $h\to 0$. The second term corresponds to a variance term, which increases when $h \rightarrow 0$. If one has to choose $h$ in a family $\mathcal{H}$ of possible bandwidths, the best choice is $\bar{h}$ where 
\begin{equation}
\label{oracleN}
\bar{h}:= \argmin_{h\in \mathcal{H}}\big\{\norme{N-K_h\star N}_2 + \frac{1}{\sqrt{nh}} \norme{K}_2 \big\}.
\end{equation}
This ideal compromise $\bar{h}$ is called the "oracle": it  depends on $N$ and then cannot be used in practice. Hence one wants to find an automatic (data-driven) method for selecting this bandwidth. The Lepski method \cite{L1,L2,L3,L4} is one of the various theoretical adaptive methods available for selecting a density estimator. In particular it is the only known method able to select a bandwidth for kernel estimators. However the method do not usually provide a non asymptotic oracle inequality.  Recently, Goldenschluger and Lepski \cite{LepsP} developed powerful probabilistic tools  that enable to overcome this weakness and that can provide with a fully data-driven bandwidth selection method. We give here a practical illustration of their work:  how should one select the bandwidth for a given kernel in dimension 1?

The main idea is to estimate the bias term by looking at several estimators. The method consists in setting first, for any $x$ and any $h,h'>0$,
$$\hat N_{h,h'}(x):=\frac{1}{n}\sum_{i=1}^n(K_h\star K_{h'})(x-X_i)=(K_h\star \hat N_{h'})(x).$$
Next, for any $h\in\HH$, define 
\begin{eqnarray*}
A(h)&:=&\sup_{h'\in\HH}\big\{\|\hat N_{h,h'}-\hat N_{h'}\|_2-\frac{\chi}{\sqrt{nh'}}\|K\|_2\big\}_+\\
&=&\sup_{h'\in\HH}\big\{\max\big\{0,\|\hat N_{h,h'}-\hat N_{h'}\|_2-\frac{\chi}{\sqrt{nh'}}\|K\|_2\big\}\big\},
\end{eqnarray*}
where, given $\e>0$, we set
$\chi:=(1+\e)(1+\|K\|_1).$ The quantity $A(h)$ is actually a good estimator of $\norme{N-K_h\star N}_2$ up to the term $\norme{K}_1$ (see (\ref{Ah*}) and (\ref{biais}) in Section~\ref{proofs-section}). The next step consists then in setting
\begin{equation}\label{selectN}
\hat h:=\arg\min_{h\in\HH}\big\{A(h)+\frac{\chi}{\sqrt{nh}}\|K\|_2\big\},
\end{equation}
and our final estimator of $N$ is obtained by putting $ \hat N:=\hat N_{\hat h}$. Let us specify what we are able to prove at this stage.
\begin{prop}\label{estN}
Assume $N \in \mathbb{L}^\infty$ and work under Assumption \ref{HypK}. If $\HH\subset\{D^{-1},D=1,\ldots, D_{\max}\}$ with $D_{\max}=\delta n$ for $\delta>0$, then, for any $q\geq 1 $,
 $$\E\big[\|\hat N-N\|_2^{2q}\big]\leq C(q) \chi^{2q} \inf_{h\in \HH}\big\{\|K_{h}\star N-N\|_2^{2q}+\frac{\norme{K}_2^{2q}}{(hn)^q}\big\}+C_1n^{-q},$$
 where $C(q)$ is a constant depending on $q$ and $C_1$ is a constant depending on $q,$ $\ve,$ $\delta,$ $\|K\|_2,$ $\|K\|_1$  and $\|N\|_\infty$.
 \end{prop}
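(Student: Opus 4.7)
My plan follows the standard Goldenschluger--Lepski template, combining a purely deterministic oracle decomposition with Talagrand's concentration inequality as the main probabilistic tool. For any fixed $h\in\HH$, the triangle inequality together with the convolution identity $\hat N_{\hat h,h}=K_h\star\hat N_{\hat h}=\hat N_{h,\hat h}$ gives
\[
\|\hat N-N\|_2\leq \|\hat N_{h,\hat h}-\hat N_{\hat h}\|_2+\|\hat N_{h,\hat h}-\hat N_h\|_2+\|\hat N_h-N\|_2.
\]
The very definition of $A$ bounds the first two terms by $A(\hat h)+\chi(n\hat h)^{-1/2}\|K\|_2$ and $A(h)+\chi(nh)^{-1/2}\|K\|_2$, respectively, and the minimality of $\hat h$ in \eqref{selectN} converts the former into a bound depending on $h$ alone, whence
\[
\|\hat N-N\|_2\leq 2A(h)+2\chi(nh)^{-1/2}\|K\|_2+\|\hat N_h-N\|_2.
\]

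Next I decompose $A(h)$ into a bias piece and a stochastic piece. Setting $Z_{h'}:=\hat N_{h'}-K_{h'}\star N$ and $Z_{h,h'}:=\hat N_{h,h'}-K_h\star K_{h'}\star N$, the identity
\[
\hat N_{h,h'}-\hat N_{h'}=K_{h'}\star(K_h\star N-N)+Z_{h,h'}-Z_{h'}
\]
combined with Young's convolution inequality $\|K_{h'}\star(K_h\star N-N)\|_2\leq \|K\|_1\|K_h\star N-N\|_2$ yields
\[
A(h)\leq \|K\|_1\|K_h\star N-N\|_2+\sup_{h'\in\HH}\big\{\|Z_{h,h'}\|_2+\|Z_{h'}\|_2-\chi(nh')^{-1/2}\|K\|_2\big\}_+.
\]
A direct second-moment computation (copying the argument made in the excerpt for $\hat N_{h'}$) gives $\E\|Z_{h'}\|_2\leq \|K\|_2/\sqrt{nh'}$ and $\E\|Z_{h,h'}\|_2\leq \|K\|_1\|K\|_2/\sqrt{n(h\vee h')}\leq \|K\|_1\|K\|_2/\sqrt{nh'}$. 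The sum of the two means therefore equals $(1+\|K\|_1)\|K\|_2/\sqrt{nh'}$, and the choice $\chi=(1+\varepsilon)(1+\|K\|_1)$ is precisely tailored to leave a slack $\varepsilon(1+\|K\|_1)\|K\|_2/\sqrt{nh'}$ to absorb the fluctuations around these means.

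The main obstacle is to bound in $\mathbb{L}^{2q}$, uniformly in $h'\in\HH$, the fluctuation of $\|Z_{h,h'}\|_2+\|Z_{h'}\|_2$ around its mean. I view both quantities as suprema of centred empirical processes over the unit $\mathbb{L}^2$-ball via the duality $\|Z\|_2=\sup_{\|\varphi\|_2\leq 1}\langle Z,\varphi\rangle$, with envelopes of order $\|K\|_2/\sqrt{h'}$ and weak variances controlled by $\|N\|_\infty\|K\|_1^2\|K\|_2^2$ (here the hypothesis $N\in\mathbb{L}^\infty$ enters crucially). Talagrand's inequality applied pair by pair, combined with a union bound over $\HH\times\HH$ whose cardinality is at most $D_{\max}^2=\delta^2 n^2$ and with the lower bound $h'\geq 1/D_{\max}$, produces after tail integration
\[
\E\Big[\sup_{h'\in\HH}\{\|Z_{h,h'}\|_2+\|Z_{h'}\|_2-\chi(nh')^{-1/2}\|K\|_2\}_+^{2q}\Big]\leq C_1' n^{-q}.
\]
A further application of Talagrand together with $\|\hat N_h-N\|_2\leq \|K_h\star N-N\|_2+\|Z_h\|_2$ delivers $\E\|Z_h\|_2^{2q}\lesssim \|K\|_2^{2q}(nh)^{-q}$. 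Raising the decomposition of Step~1 to the $2q$-th power, inserting all these bounds and taking the infimum over $h\in\HH$ at the very end produces the announced oracle inequality with constants $C(q)\chi^{2q}$ and $C_1$.
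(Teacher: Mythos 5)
Your argument is correct and follows the same overall Goldenschluger--Lepski template as the paper: the same three-term decomposition exploiting the minimality of $\hat h$, the same bias control $\|K\|_1\|K_h\star N-N\|_2$ via Young's inequality, and Talagrand's inequality with weights $L_{h'}\propto h'^{-1/2}$ together with the constraint $h'\geq(\delta n)^{-1}$ to reduce the residual supremum to an $O(n^{-q})$ moment. The two places where you deviate are local. For the centred part of $A(h)$, the paper exploits the identity $\hat N_{h,h'}-\E[\hat N_{h,h'}]=K_h\star(\hat N_{h'}-\E[\hat N_{h'}])$, so that $\|Z_{h,h'}-Z_{h'}\|_2\leq(1+\|K\|_1)\|Z_{h'}\|_2$ and Talagrand is only ever applied to the single-index family $\{Z_{h'}\}_{h'\in\HH}$ --- this contraction is exactly what the factor $(1+\|K\|_1)$ in $\chi$ is designed for. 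You instead treat $Z_{h,h'}$ and $Z_{h'}$ as two separate processes with a union bound over $\HH\times\HH$; this also works (the polynomial cardinality is harmless against the exponential weights), at the cost of budgeting the slack $\e(1+\|K\|_1)\|K\|_2/\sqrt{nh'}$ between two deviations and of controlling the envelope and weak variance of the convolved process directly. For the fixed-bandwidth term $\E[\|\hat N_h-\E\hat N_h\|_2^{2q}]$, the paper expands the squared norm as a U-statistic and invokes de la Pe\~na--Gin\'e for the degenerate off-diagonal part, whereas you integrate the Talagrand tail; your route leaves an extra additive term of order $n^{-q}$ coming from the weak-variance contribution, which is exactly what the $C_1n^{-q}$ remainder in the statement is there to absorb. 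Both variants yield the stated inequality with constants of the required form.
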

The previous inequality is called an oracle inequality, for we have $\E[\|\hat N-N\|_2]\leq (\E[\|\hat N-N\|_2^{2q}])^{1/(2q)}$ and $\hat{h}$ is performing as well as the oracle $\bar{h}$ up to some multiplicative constant. In that sense, we are able to select the {\it best bandwidth} within our family $\HH$. 
\begin{remark}
As compared to the results of Goldenschluger and Lepski in \cite{LepsP}, we do not consider the case where $\HH$ is an interval and  we do not specify $K$ except for Assumption \ref{HypK}. This simpler method is more reasonable from a numerical point of view, since estimating $N$ is only a preliminary step. The probabilistic tool we use here is classical in model selection theory (see Section~\ref{proofs-section} and \cite{mas}) and actually, we do not use directly \cite{LepsP}. In particular the main difference is that, in our specific case, we are able to get $\max(\HH)$ fixed whereas Goldenschluger and Lepski \cite{LepsP} require $\max(\HH)$ to tend to 0 with $n$. The price to pay is that we obtain a uniform bound (see Lemma \ref{concentration} in Section \ref{technique}) which is less tight, but that will be sufficient for our purpose.
\end{remark}
\subsection{Estimation of $\frac{\partial}{\partial x}\big(g(x)N(x)\big)$ by the GL method}
The previous method can of course be adapted to estimate 
$$D(x):=\frac{\partial}{\partial x}\big(g(x)N(x)\big).$$ 
We adjust here the work of \cite{LepsS} to the setting of estimating a derivative. We again use kernel estimators with more stringent assumptions\footnote{For sake of simplicity we use the same kernel to estimate $N$ and $D$ but this choice is not mandatory.}
 on $K$. 
 \begin{assumption}[Assumption on the kernel of the derivative estimator] \label{HypK'}~
The function $K$ is differentiable, $\int K(x)dx =1$ and $\int (K'(x))^2dx <\infty$.
\end{assumption}

For any bandwidth $h >0$,  we define the kernel estimator of $D$ as
\begin{equation}\begin{array}{lll}\label{def:hatDh}
\hat D_h(x)&:=&\frac{1}{n}\sum_{i=1}^ng(X_i)K_h'(x-X_i)\\
 &=&\frac{1}{nh^2}\sum_{i=1}^ng(X_i)K'\big(\frac{x-X_i}{h}\big).\end{array}
\end{equation}
Indeed
\begin{eqnarray}
\E(\hat D_h(x))&=&\int K_h'(x-u)g(u)N(u)du\nonumber\\
&=&\big(K_h'\star (gN)\big)(x)
=\big(K_h\star (gN)'\big)(x).\nonumber
\end{eqnarray}
Again we can look at the integrated squared error of $\hat{D}$.  We obtain the following upper bound:
$$\E[\|\hat{D}_h-D\|_2] \leq \|D-K_h\star D\|_2 + \E[\|K_h\star D - \hat{D}_h\|_2],$$
with
\begin{eqnarray*}
 \E[\|K_h\star D - \hat{D}_h\|_2^2 &= &\frac{1}{n^2} \E\Big[\int  \Big[\sum_{i=1}^n \Big(g(X_i)K'_h(x-X_i)-\E\big(g(X_i)K'_h(x-X_i)\big)\Big)\Big]^2 dx \Big]\\
 &=& \frac{1}{n^2} \int  \sum_{i=1}^n \E\Big[\Big(g(X_i)K'_h(x-X_i)-\E\big(g(X_i)K'_h(x-X_i)\big)\Big)^2\Big] dx\\ & \leq &\frac{1}{n} \E \big[\int g^2(X_1){K'_h}^2(x-X_1) dx\big]\\&\leq& \frac{\norme{g}_\infty^2\norme{K'_h}_2^2}{n}=\frac{\norme{g}_\infty^2\norme{K'}_2^2}{nh^3}.
 \end{eqnarray*}
Hence,  by Cauchy-Schwarz inequality
 $$\E[\|D-\hat{D}_h\|_2]\leq \norme{D-K_h\star D}_2 + \frac{1}{\sqrt{nh^3}} \norme{g}_\infty\norme{K'}_2.$$
Once again, there is a bias-variance decomposition, but now the variance term is of order $\frac{1}{\sqrt{nh^3}} \norme{K'}_2\norme{g}_\infty$. We therefore define the oracle by
\begin{equation}
\bar{\bar h}:= \argmin_{h\in \tilde{\mathcal{H}}}\big\{\norme{D-K_h\star D}_2 + \frac{1}{\sqrt{nh^3}} \norme{g}_\infty \norme{K'}_2 \big\}.
\end{equation}
Now let us apply the GL method in this case. Let $\tilde\HH$ be a family of bandwidths.
We set for any $h,h'>0$,
\begin{eqnarray*}
 \hat D_{h,h'}(x)&:=&\frac{1}{n}\sum_{i=1}^ng(X_i)(K_{h}\star K_{h'})'(x-X_i)
\end{eqnarray*}
and
\begin{eqnarray*}
\tilde A(h)&:=&\sup_{h'\in\tilde\HH}\big\{\|\hat D_{h,h'}-\hat D_{h'}\|_2-\frac{\tilde\chi}{\sqrt{n h'^3}}\|g\|_\infty\|K'\|_2\big\}_+,
\end{eqnarray*}
where, given $\tilde\e>0$, we put
$\tilde\chi:=(1+\tilde\e)(1+\|K\|_1).$
Finally, we estimate $D$ by using $\hat D:=\hat D_{\tilde h}$ with
\begin{equation}\label{def:tildeh}
\tilde h:=\argmin_{h\in\tilde\HH}\big\{\tilde A(h)+\frac{\tilde\chi}{\sqrt{nh^3}}\|g\|_\infty\|K'\|_2\big\}.
\end{equation}
As before, we are able to prove an oracle inequality for $\hat{D}$.
\begin{prop}\label{estD}
Assume $N \in \mathbb{L}^\infty$. Work under Assumption \ref{HypK'}.
If $\tilde\HH=\{D^{-1}, D=1 \ldots, \tilde D_{\max}\}$, with $\tilde D_{\max}=\sqrt{\tilde\delta n}$ for $\tilde\delta>0$, then for any $q\geq 1 $,
 $$\E\big[\|\hat D-D\|_2^{2q}\big]\leq \tilde C(q)\tilde{\chi}^{2q} \inf_{h\in \tilde \HH}\big\{\|K_{h}\star D-D\|_2^{2q}+\big(\frac{\norme{g}_\infty\|K'\|_2}{\sqrt{nh^3}}\big)^{2q}\big\}+\tilde C_1n^{-q},$$
 where $\tilde C(q)$ is a constant depending on $q$ and $\tilde C_1$ is a constant depending on $q,$ $\tilde\ve,$ $\tilde\delta,$ $\norme{K'}_2,$ $\norme{K'}_1$, $\norme{g}_\infty$  and $\norme{N}_\infty$.
\end{prop}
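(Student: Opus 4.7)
The strategy mirrors the Goldenschluger--Lepski template used for Proposition \ref{estN}, adapted to the derivative estimator. Two identities drive the argument: $\E[\hat D_h]=K_h\star D$ and, by commutativity of convolution and the identity $(K_h\star K_{h'})'=K_h\star K'_{h'}=K'_h\star K_{h'}$, one has
$$\hat D_{h,h'}=K_h\star\hat D_{h'}=K_{h'}\star\hat D_h=\hat D_{h',h}.$$
The only genuinely new feature compared to the density case is the variance rate $\E\|\hat D_h-\E\hat D_h\|_2^2\le\|g\|_\infty^2\|K'\|_2^2/(nh^3)$, already established just before the statement.

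For an arbitrary $h\in\tilde\HH$ I would decompose
$$\|\hat D-D\|_2\le\|\hat D_{\tilde h}-\hat D_{h,\tilde h}\|_2+\|\hat D_{\tilde h,h}-\hat D_h\|_2+\|\hat D_h-D\|_2.$$
By the definition of $\tilde A$, the first two terms are bounded respectively by $\tilde A(h)+\tilde\chi\|g\|_\infty\|K'\|_2/\sqrt{n\tilde h^3}$ and $\tilde A(\tilde h)+\tilde\chi\|g\|_\infty\|K'\|_2/\sqrt{nh^3}$, and the minimality of $\tilde h$ in \eqref{def:tildeh} collapses them to $2\tilde A(h)+2\tilde\chi\|g\|_\infty\|K'\|_2/\sqrt{nh^3}$. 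It remains to control $\tilde A(h)$ and the classical error $\|\hat D_h-D\|_2$ in $\mathbb{L}^{2q}$. For $\tilde A(h)$, the key algebraic step is
$$\hat D_{h,h'}-\hat D_{h'}=K_{h'}\star(K_h\star D-D)+K_h\star(\hat D_{h'}-\E\hat D_{h'})-(\hat D_{h'}-\E\hat D_{h'}),$$
together with Young's inequality $\|K_{h'}\star f\|_2\le\|K\|_1\|f\|_2$, yielding
$$\tilde A(h)\le\|K\|_1\|K_h\star D-D\|_2+\sup_{h'\in\tilde\HH}\Bigl((1+\|K\|_1)\|\hat D_{h'}-\E\hat D_{h'}\|_2-\tfrac{\tilde\chi\|g\|_\infty\|K'\|_2}{\sqrt{nh'^3}}\Bigr)_+.$$
Since $\tilde\chi=(1+\tilde\ve)(1+\|K\|_1)$, the margin $\tilde\ve$ makes the supremum a pure residual as soon as a sharp concentration bound for the centered process is available.

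The main obstacle is this concentration step. I would write $\|\hat D_{h'}-\E\hat D_{h'}\|_2$ as the supremum over the unit ball of $\mathbb{L}^2$ of the centered empirical process
$$\psi\mapsto\frac{1}{n}\sum_{i=1}^n\Bigl\{g(X_i)\int K'_{h'}(x-X_i)\psi(x)\,dx-\E[\,\cdot\,]\Bigr\},$$
whose envelope is of order $\|g\|_\infty\|K'\|_\infty h'^{-2}$ and whose weak variance is of order $\|g\|_\infty^2\|K'\|_2^2/(nh'^3)$ (with a factor depending on $\|N\|_\infty$). Talagrand/Bousquet's inequality then provides, uniformly in $h'\in\tilde\HH$, a bound of the form $(1+\tilde\ve/2)\|g\|_\infty\|K'\|_2/\sqrt{nh'^3}$ plus a remainder of order $n^{-1/2}$ coming from the envelope, with exponential tails. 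A union bound over $\tilde\HH$, whose cardinality is $O(\sqrt{\tilde\delta n})$ by the choice $\tilde D_{\max}=\sqrt{\tilde\delta n}$, costs only a logarithmic factor, and integration of the tails contributes at worst a residual of order $n^{-q}$ that is absorbed into the $\tilde C_1 n^{-q}$ term. This is precisely the role of the uniform concentration result (Lemma \ref{concentration} in Section \ref{technique}) invoked for $\hat N$: only the variance rate changes from $(nh)^{-1}$ to $(nh^3)^{-1}$.

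Finally, the same Talagrand bound, applied at a single bandwidth $h$, controls $\|\hat D_h-\E\hat D_h\|_2$ and therefore $\|\hat D_h-D\|_2$ by the bias plus the prescribed variance term. Assembling the three pieces, raising to the $2q$-th power via $(a+b)^{2q}\le C(q)(a^{2q}+b^{2q})$, taking expectations, and finally infimizing over $h\in\tilde\HH$ yields the claimed oracle inequality.
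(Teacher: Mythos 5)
Your proposal is correct and follows essentially the same route as the paper: the same three-term Goldenschluger--Lepski decomposition, the same bias/centered-process split of $\tilde A(h)$ via Young's inequality with the margin $\tilde\ve$ in $\tilde\chi$, and the same uniform Talagrand/Bousquet concentration over $\tilde\HH$ with the variance rate $(nh^3)^{-1}$. The only (immaterial) deviation is that you control the last term $\|\hat D_h-D\|_2$ by reapplying Talagrand at a fixed bandwidth, whereas the paper uses a direct Rosenthal/U-statistic moment computation as in Proposition \ref{estN}; both give the required bias-plus-variance bound in $\mathbb{L}^{2q}$.
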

\subsection{Estimation of $\kappa$ (and $\lambda$)}\label{lambda-section}
As mentioned in the introduction, we will not consider the problem of estimating $\lambda$ and we work under Assumption \ref{HypLa}: an estimator $\hat{\lambda}_n$ of $\lambda$ is furnished by the practitioner prior to the data processing for estimating $B$. It becomes subsequently straightforward to obtain an estimator of $\kappa$ by estimating $ \rho_g(N)$, see the form of \eqref{rho}. We estimate $\rho_g(N)$ by
\begin{equation}\label{def:hatrhon}
\hat\rho_n:=\frac{\sum_{i=1}^nX_i}{\sum_{i=1}^ng(X_i)+c},\end{equation}
where $c >0$ is a (small) tuning constant\footnote{In practice, one can take $c=0$.}.
Next we simply put 
\begin{equation}\label{def:hatkappan}
\hat\kappa_n= \hat\lambda_n\hat\rho_n.
\end{equation}
\subsection{Approximated inversion of ${\mathcal L}$}\label{sec:inversion}
From \eqref{def:hatrhon}, the right-hand side of \eqref{fundamental equation} is consequently estimated by 
$$ \hat\kappa_n\hat D +\hat\lambda_n \hat N.$$
It remains to formally apply the inverse operator $\mathcal{L}^{-1}$. However
Given $\varphi$, the dilation equation 
\begin{equation} \label{inverse L}
{\mathcal L}\big(\psi\big)(x)=4\psi(2x)-\psi(x) = \varphi(x),\;\;x\in \R_+
\end{equation}
admits in general infinitely many solutions, see Doumic {\it et al.} \cite{DPZ}, Appendix A. Nevertheless, if $\varphi \in \mathbb{L}^2$, there is a unique solution $\psi \in \mathbb{L}^2$ to \eqref{inverse L}, see Proposition A.1. in \cite{DPZ}, and moreover it defines a continuous operator ${\cal L}^{-1}$ from $\mathbb{L}^2$ to $\mathbb{L}^2$. Since $K_h$ and $gK_h'$ belong to $\mathbb{L}^2$, one can define a unique solution to \eqref{inverse L} when $\varphi= \hat\kappa_n\hat D +\hat\lambda_n \hat N$. This inverse is not analytically known but we can only approximate it via the fast algorithm described below.

Given $T>0$ and an integer $k\geq 1$, we construct a linear operator ${\mathcal L}_k^{-1}$ that maps a function $\varphi \in {\mathbb L}^2$ into a function with compact support in $[0,T]$ as follows.
Consider the regular grid on $[0,T]$ with mesh $k^{-1}T$ defined by
$$0 =x_{0,k} < x_{1,k} < \cdots < x_{i,k}:=\tfrac{i}{k}T < \ldots < x_{k,k}=T.$$
We set
$$\varphi_{i,k}:=\frac{k}{T}\int_{x_{i},k}^{x_{i+1,k}}\varphi(x)dx\;\;\text{for}\;\;i=0,\ldots, k-1,$$
and define by induction the sequence \footnote{for any sequence $u_i,i=1,2,\ldots$, we define
$$u_{i/2}:=
\big\{
\begin{array}{ll}
u_{i/2} & \text{if}\;i\;\text{is even} \\
\tfrac{1}{2}(u_{(i-1)/2}+u_{(i+1)/2})& \text{otherwise}. 
\end{array}
\big.$$}
$$H_{i,k}(\varphi):=\frac{1}{4}(H_{i/2,k}(\varphi)+\varphi_{i/2,k}),\;\;\;i=0,\ldots, k-1,$$
what gives, for $i=0$ and $i=1$
$$H_0(\varphi):=\tfrac{1}{3}\varphi_{0,k},\;\;H_1(\varphi):=\tfrac{4}{21}\varphi_{0,k}+\tfrac{1}{7}\varphi_{1,k}.$$ 
Finally, we define
\begin{equation} \label{def algo}
{\mathcal L}^{-1}_k(\varphi)(x):=\sum_{i=0}^{k-1} H_{i,k}(\varphi) 1_{[x_{i,k},x_{i+1,k})}(x).
\end{equation}
As stated in the introduction,  we eventually estimate $H=BN$ by
$$\hat H= {\mathcal L}^{-1}_k(\hat\kappa_n\hat D +\hat\lambda_n \hat N).$$
The stability of the inversion is given by the fact that ${\cal L}^{-1}_k: \mathbb{L}^2 \rightarrow \mathbb{L}^2$ is  continuous,  see Lemma \ref{lk-1} in Section \ref{technique}, and by the following approximation result between ${\cal L}^{-1}$ and ${\cal L}^{-1}_k.$
\begin{prop}\label{stabiliteL2}
Let  $T>0$ and $\varphi \in \mathcal{W}^{1}$. Let ${\mathcal L}^{-1}(\varphi)$ denote the unique solution of \eqref{inverse L} belonging to ${\mathbb L}^{2}$. We have  for $k\geq 1$: 
$$\|{\mathcal L}^{-1}_k(\varphi)-{\mathcal L}^{-1}(\varphi)\|_{2,T} \leq C \f{T}{ \sqrt{k}} \|\varphi\|_{ \mathcal{W}^{1}},$$
with  $C < \f{1}{\sqrt{6}}.$
\end{prop}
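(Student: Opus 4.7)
The plan is to decompose the error $\|\mathcal{L}_k^{-1}(\varphi)-\mathcal{L}^{-1}(\varphi)\|_{2,T}$ into an $\mathbb{L}^2$-orthogonal projection part plus a scheme-consistency part, and control each separately. As a preliminary, iterating the identity $\psi(x) = \tfrac14\bigl(\psi(x/2) + \varphi(x/2)\bigr)$ (obtained from \eqref{inverse L} by replacing $x$ with $x/2$) yields the absolutely convergent representation $\psi(x) = \sum_{j=1}^{\infty}4^{-j}\varphi(x/2^j)$ for $\psi := \mathcal{L}^{-1}(\varphi)$. Differentiating term by term and using the dilation identity $\|\varphi'(\cdot/2^j)\|_2 = 2^{j/2}\|\varphi'\|_2$ shows $\psi \in \mathcal{W}^1$ with $\|\psi'\|_2 \leq c\,\|\varphi'\|_2$ for an explicit constant $c$ coming from a finite geometric series.

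Let $P_k\psi$ denote the orthogonal $\mathbb{L}^2$-projection of $\psi$ onto the piecewise constant functions on the grid $\{x_{i,k}\}$, with cell values $\psi_{i,k} := (k/T)\int_{x_{i,k}}^{x_{i+1,k}}\psi$. Since $\mathcal{L}_k^{-1}(\varphi)$ is itself piecewise constant on the same grid, Pythagoras yields
\begin{equation*}
\|\mathcal{L}_k^{-1}(\varphi)-\psi\|_{2,T}^2 \;=\; \|\mathcal{L}_k^{-1}(\varphi)-P_k\psi\|_{2,T}^2 \;+\; \|P_k\psi-\psi\|_{2,T}^2.
\end{equation*}
The projection error is bounded cell-by-cell by the Wirtinger inequality, giving $\|P_k\psi-\psi\|_{2,T}^2 \leq (T/(k\pi))^2\|\psi'\|_{2,T}^2$, a contribution of order $(T/k)^2$ that is strictly of lower order than the target $(T/\sqrt k)^2$ and is absorbed into the final constant.

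For the consistency term, averaging $\psi(x) = \tfrac14(\psi(x/2) + \varphi(x/2))$ over cell $i$ and changing variable $y=x/2$ produces
\begin{equation*}
\psi_{i,k} \;=\; \tfrac14\bigl(\bar\psi_{I_i} + \bar\varphi_{I_i}\bigr),\qquad I_i := [iT/(2k),\,(i+1)T/(2k)],
\end{equation*}
where $I_i$ is the relevant half-cell of the refined grid, sitting inside cell $\lfloor i/2\rfloor$. Subtracting the scheme's recursion $H_{i,k} = \tfrac14(H_{i/2,k}+\varphi_{i/2,k})$ and setting $e_{i,k} := H_{i,k} - \psi_{i,k}$ yields
\begin{equation*}
e_{i,k} \;=\; \tfrac14\,e_{\lfloor i/2\rfloor,k} + \tfrac14\,\eta_i,
\end{equation*}
where $\eta_i$ collects the mismatches between the half-cell averages $\bar\psi_{I_i},\bar\varphi_{I_i}$ and the full-cell (or arithmetic-mean-of-two-adjacent, for odd $i$) values used by the scheme. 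A fundamental-theorem-of-calculus estimate on differences of adjacent half-cell averages, combined with the $\mathcal{W}^1$-control of $\psi$ from the first paragraph, gives $|\eta_i|\leq C'\sqrt{T/k}\,\|\varphi\|_{\mathcal{W}^1}$ uniformly in $i$, with $C'<1$.

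Iterating the recursion one obtains $e_{i,k} = \sum_{j\geq 0}4^{-(j+1)}\eta_{\lfloor i/2^j\rfloor}$. Cauchy-Schwarz on the geometric factor $\sum_j 4^{-(j+1)} = \tfrac13$ followed by a Fubini re-summation, using the combinatorial identity $\#\{i:\lfloor i/2^j\rfloor=m\}=2^j$ and $\sum_j 2^{-(j+2)}=\tfrac12$, yields
\begin{equation*}
\sum_i e_{i,k}^2 \;\leq\; \tfrac16 \sum_m \eta_m^2 \;\leq\; \tfrac{C'^2}{6}\cdot T\,\|\varphi\|_{\mathcal{W}^1}^2.
\end{equation*}
Multiplying by $T/k$ gives $\|\mathcal{L}_k^{-1}(\varphi)-P_k\psi\|_{2,T}^2 \leq (C'^2/6)(T^2/k)\|\varphi\|_{\mathcal{W}^1}^2$, and combining with the lower-order projection bound produces the stated estimate with $C<1/\sqrt 6$. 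The main obstacle is the odd-index case in the scheme: the replacement of $H_{i/2,k}$ and $\varphi_{i/2,k}$ by arithmetic means of values at two adjacent coarse cells creates a cross-cell mixing in $\eta_i$ that must be controlled both to guarantee the uniform bound with $C'<1$ and to preserve the exact factor $\tfrac12$ arising from $\sum_j 2^j\cdot 4^{-(j+1)}$, without which the final constant would fail to remain strictly below $1/\sqrt 6$.
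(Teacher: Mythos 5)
Your strategy (Pythagoras onto the piecewise constants of the grid, then a tree-summation of local consistency errors $\eta_i$) is genuinely different from the paper's, which runs a cell-by-cell double induction on $L_{i,k}=\int_{x_{i,k}}^{x_{i+1,k}}(H_{i,k}-{\mathcal L}^{-1}(\varphi))^2$ and on the jumps $|H_{i+1,k}-H_{i,k}|^2$. The outline is viable, but as written it has two genuine gaps, both located exactly where you yourself point at the end.

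First, the recursion $e_{i,k}=\f{1}{4}e_{\lfloor i/2\rfloor,k}+\f{1}{4}\eta_i$ is false for odd $i$: by the footnoted convention, $H_{i/2,k}=\f{1}{2}\bigl(H_{(i-1)/2,k}+H_{(i+1)/2,k}\bigr)$, so the error equation branches into \emph{two} parents, $e_{i,k}=\f{1}{8}e_{(i-1)/2,k}+\f{1}{8}e_{(i+1)/2,k}+\f{1}{4}\eta_i$, and the unknown $H$-values cannot be shoved into $\eta_i$ without destroying the argument. Consequently the closed form $e_{i,k}=\sum_j 4^{-(j+1)}\eta_{\lfloor i/2^j\rfloor}$ and the count $\#\{i:\lfloor i/2^j\rfloor=m\}=2^j$ do not apply; you must sum over a weighted ternary tree of ancestor paths. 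It happens that the total path mass is still $\sum_p w_p=\f{1}{3}$ (each node sends total weight $\f14$ upward, whether to one parent with weight $\f14$ or to two with weight $\f18$ each) and the total mass arriving at a fixed node of level $j$ is still at most $\f{1}{4}\,2^{-j}$ (each node receives $\f14+\f18+\f18=\f12$ from the level below), so the factor $\f16$ survives the Cauchy--Schwarz/Fubini step --- but this is precisely the computation you need to write down, and it is not a cosmetic fix.

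Second, the bound $|\eta_i|\leq C'\sqrt{T/k}\,\|\varphi\|_{\mathcal{W}^1}$ with $C'<1$ is asserted, not proved, and it is where the announced constant $C<1/\sqrt6$ lives or dies: after absorbing the Wirtinger projection term you need roughly $C'^2<1-6c^2/\pi^2$ uniformly in $k\geq1$. For even $i$ a change of variables plus Cauchy--Schwarz gives $|\bar\psi_{\text{cell}}-\bar\psi_{\text{half-cell}}|\leq\sqrt{T/(2k)}\,\|\psi'\|_2$ and the analogue for $\varphi$, which is comfortably small. But for odd $i$ the mismatch compares a half-cell average of $\psi$ (resp.\ $\varphi$) with the arithmetic mean of two \emph{adjacent} coarse-cell averages; a naive fundamental-theorem-of-calculus estimate there produces a constant of order $\f{1}{2\sqrt2}+\f{1}{2}\sqrt{3/2}\approx0.97$ \emph{per function}, hence $C'>1$ once the contributions of $\psi$ and $\varphi$ are added. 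You must either sharpen these local estimates (e.g.\ bound $\sum_m\eta_m^2$ by $\square\,(T/k)\,(\|\psi'\|_2^2+\|\varphi'\|_2^2)$ using the locality of $\|\psi'\|_{L^2(\text{cell})}$ rather than the uniform bound, which in fact yields a better rate in $k$ and ample room for the constant) or find slack elsewhere. Until one of these is carried out, the claim $C<1/\sqrt6$ is not established.
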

Hence, ${\mathcal L}^{-1}_k$ behaves nicely over sufficiently smooth functions. Moreover the estimation of $N$ and the estimators $\hat\kappa_n$ and $\hat\lambda_n$ are essentially regular. Finally we estimate $B$ as stated in \eqref{def:Btilde}. The overall behaviour of the estimator is finally governed by the quality of estimation of the derivative $D$, which determines the accuracy of the whole inverse problem in all these successive steps.

\subsection{Oracle inequalities for $\hat H$} \label{oracle-section}

We are ready to state our main result, namely the oracle inequality fulfilled by $\hat{H}$.
\begin{theorem}
\label{oraclesurH}
Work under Assumptions~\ref{as:an} and \ref{HypLa} and
let $K$ a kernel satisfying Assumptions \ref{HypK} and \ref{HypK'}.  Define $\HH\subset\{D^{-1}, D=1 \ldots , D_{\max}\}$ with $D_{\max}=\delta n$ for $\delta>0$ and  $\tilde\HH\subset\{D^{-1}:\ D=1,..., \tilde D_{\max}\}$ with $\tilde D_{\max}=\sqrt{\tilde\delta n}$ for $\tilde\delta>0.$  For $k\geq 1$ and $T>0,$ let us define ${\cal L}^{-1}_k$ by \eqref{def algo} on the interval $[0,T].$ Finally, define the estimator
$$\hat{H}={\cal L}_k^{-1} (\hat\lambda_n \hat N_{\hat h} + \hat \kappa_n \hat D_{\tilde h}),$$ 
where $\hat N_{\hat h}$ and $\hat D_{\tilde h}$ are kernel estimators defined respectively by \eqref{def:hatNh} and \eqref{def:hatDh}, and where we have selected  $\hat h$ and $\tilde h$ by \eqref{selectN} and \eqref{def:tildeh}. Moreover take $\hat\kappa_n$ as defined by \eqref{def:hatrhon} and \eqref{def:hatkappan} for some $c>0.$

The following upper bound holds  for any $n:$
\begin{multline*}
\E\big[\norme{\hat H -H}_{2,T}^q\big]\leq C_1 \big\{ \sqrt{R_{\lambda,n}} \inf_{h\in \tilde\HH}\big[ \|K_{h}\star D-D\|_2^{q}+\big(\frac{\norme{g}_\infty\|K'\|_2}{\sqrt{nh^3}}\big)^{q}\big] \big.\\
\big.+ \inf_{h\in \HH}\big[\|K_{h}\star N-N\|_2^{q}+\big(\frac{\norme{K}_2}{\sqrt{nh}}\big)^q\big]+ \ve_{\lambda,n} ^q+ \big((\|N\|_{{\cal W}^1} +\|g N\|_{{\cal W}^2})\f{T}{\sqrt{k}}\big)^q\big\}+
C_2n^{-\frac{q}{2}},
\end{multline*}
where $C_1$ is a constant depending on $q$, $g$, $N$, $\ve$, $\tilde\ve$, $\norme{K}_1$ and $c$; and $C_2$ is a constant depending on $q,$ $g,$ $N,$ $\tilde\ve,$ $\ve,$ $\delta,$ $\tilde\delta,$ $\norme{K}_2,$ $\norme{K}_1,$ $\norme{K'}_2,$ $\norme{K'}_1$. 
\end{theorem}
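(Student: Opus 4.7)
The overall strategy is to split $\hat H - H$ into a deterministic discretization error and a stochastic estimation error, and to reduce the stochastic piece, via the linearity and $\mathbb{L}^2$-continuity of $\mathcal{L}_k^{-1}$, to combinations of the ingredients already analysed in Propositions~\ref{estN}, \ref{estD}, \ref{stabiliteL2} and in Assumption~\ref{HypLa}.

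First I would write
\begin{equation*}
\hat H - H = \underbrace{\mathcal{L}_k^{-1}\big(\hat\lambda_n \hat N_{\hat h}+\hat\kappa_n \hat D_{\tilde h}\big) - \mathcal{L}_k^{-1}\big(\lambda N+\kappa D\big)}_{E_{\text{stat}}} + \underbrace{\mathcal{L}_k^{-1}\big(\lambda N+\kappa D\big) - \mathcal{L}^{-1}\big(\lambda N+\kappa D\big)}_{E_{\text{num}}},
\end{equation*}
using $H=\mathcal{L}^{-1}(\lambda N+\kappa D)$, which follows from \eqref{fundamental equation} and \eqref{def operateur tau}. The deterministic piece $E_{\text{num}}$ is controlled directly by Proposition~\ref{stabiliteL2}: since $\lambda N+\kappa D=\lambda N+\kappa (gN)'\in \mathcal{W}^1$ with $\mathcal{W}^1$-norm bounded by a constant multiple of $\|N\|_{\mathcal{W}^1}+\|gN\|_{\mathcal{W}^2}$, this reproduces the last term of the announced bound.

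For $E_{\text{stat}}$, the $\mathbb{L}^2$-continuity of $\mathcal{L}_k^{-1}$ (Lemma~\ref{lk-1}) combined with its linearity yields
\begin{equation*}
\|E_{\text{stat}}\|_{2,T}\lesssim \|\hat\lambda_n \hat N_{\hat h}-\lambda N\|_2 + \|\hat\kappa_n \hat D_{\tilde h}-\kappa D\|_2.
\end{equation*}
I would then use the additive splittings
\begin{equation*}
\hat\lambda_n \hat N_{\hat h}-\lambda N = \lambda(\hat N_{\hat h}-N)+(\hat\lambda_n-\lambda)N+(\hat\lambda_n-\lambda)(\hat N_{\hat h}-N),
\end{equation*}
\begin{equation*}
\hat\kappa_n \hat D_{\tilde h}-\kappa D = \hat\kappa_n(\hat D_{\tilde h}-D)+(\hat\kappa_n-\kappa)D,
\end{equation*}
together with $\hat\kappa_n-\kappa=\hat\lambda_n(\hat\rho_n-\rho_g(N))+(\hat\lambda_n-\lambda)\rho_g(N)$. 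Each resulting piece matches one of the terms in the theorem: the nonparametric errors $\|\hat N_{\hat h}-N\|_2$ and $\|\hat D_{\tilde h}-D\|_2$ come from Propositions~\ref{estN} and~\ref{estD}; the parametric error $|\hat\lambda_n-\lambda|$ from Assumption~\ref{HypLa}; and $|\hat\rho_n-\rho_g(N)|$ from a standard ratio-estimator argument on the high-probability event $\{\sum_{i=1}^n g(X_i)+c\geq \tfrac{n}{2}\int gN\}$, the numerator and denominator having all moments thanks to Assumption~\ref{as:an}. The complementary event, of exponentially small probability, and the second-order cross term $(\hat\lambda_n-\lambda)(\hat N_{\hat h}-N)$ are absorbed into the $C_2 n^{-q/2}$ residual after a crude Cauchy--Schwarz split.

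The main obstacle is the coupling between the random multiplier $\hat\kappa_n=\hat\lambda_n\hat\rho_n$ and the nonparametric error $\|\hat D_{\tilde h}-D\|_2$, which is precisely what produces the $\sqrt{R_{\lambda,n}}$ prefactor on the $D$-oracle infimum in the statement. I would handle it with the Cauchy--Schwarz bound
\begin{equation*}
\E\big[\hat\kappa_n^q\|\hat D_{\tilde h}-D\|_2^q\big]\leq \sqrt{\E[\hat\lambda_n^{2q}\hat\rho_n^{2q}]}\cdot\sqrt{\E\big[\|\hat D_{\tilde h}-D\|_2^{2q}\big]},
\end{equation*}
followed by uniform control of $\hat\rho_n$ on the good event (via $g\in\mathbb{L}^\infty$, the moments of $X_i$ and the tuning constant $c>0$), so that the first square root reduces to a constant times $\sqrt{R_{\lambda,n}}$, while the second is furnished by Proposition~\ref{estD}. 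Gathering all pieces with $(a+b)^q\leq 2^{q-1}(a^q+b^q)$ then yields the claimed oracle inequality.
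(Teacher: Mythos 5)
Your decomposition is exactly the paper's: split off the numerical error $\mathcal{L}_k^{-1}(\mathcal{L}(BN))-\mathcal{L}^{-1}(\mathcal{L}(BN))$ and control it by Proposition~\ref{stabiliteL2}, push the statistical error through Lemma~\ref{lk-1}, then expand $\hat\lambda_n\hat N-\lambda N$ and $\hat\kappa_n\hat D-\kappa D$ and invoke Propositions~\ref{estN}, \ref{estD}, Assumption~\ref{HypLa} and a Rosenthal/Bernstein bound for $\hat\rho_n$ (the paper's Lemma~\ref{hatrho}). However, two of your moment-bookkeeping steps fail as written. First, the cross term $(\hat\lambda_n-\lambda)(\hat N_{\hat h}-N)$ cannot be ``absorbed into the $C_2n^{-q/2}$ residual'': $\|\hat N_{\hat h}-N\|_2$ is not $O(n^{-1/2})$, and bounding $\E[|\hat\lambda_n-\lambda|^q\|\hat N_{\hat h}-N\|_2^q]$ by Cauchy--Schwarz would require the $2q$-th moment of $|\hat\lambda_n-\lambda|$, which Assumption~\ref{HypLa} does not provide. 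This term only appears because of your choice of splitting; the paper writes $\hat\lambda_n\hat N-\lambda N=\hat\lambda_n(\hat N-N)+(\hat\lambda_n-\lambda)N$ and bounds the first piece by $(\E[\hat\lambda_n^{2q}])^{1/2}(\E[\|\hat N-N\|_2^{2q}])^{1/2}$, which avoids the issue entirely (alternatively, bound $|\hat\lambda_n-\lambda|\le|\hat\lambda_n|+\lambda$ and merge with the main $N$-term).

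Second, and more substantively, your treatment of $\E[\hat\kappa_n^q\|\hat D_{\tilde h}-D\|_2^q]$ does not close. After your Cauchy--Schwarz you must show $\E[\hat\lambda_n^{2q}\hat\rho_n^{2q}]\lesssim R_{\lambda,n}$, but $\hat\rho_n$ is \emph{not} uniformly bounded on the good event $\{\sum_i g(X_i)+c\ge\tfrac n2\int gN\}$: there it is only bounded by a multiple of $\bar X_n$, which has all moments but is not a.s.\ bounded (Assumption~\ref{as:an} gives moments of $N$, not compact support). Since $\hat\lambda_n$ is not assumed independent of the sample, decoupling $\hat\lambda_n^{2q}$ from $\hat\rho_n^{2q}$ by a further Cauchy--Schwarz would require $\E[\hat\lambda_n^{4q}]$, again unavailable. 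The paper resolves this with a three-factor H\"older inequality, $\E[|\hat\lambda_n|^q|\hat\rho_n|^q\|\hat D-D\|_2^q]\le(\E[\hat\lambda_n^{2q}])^{1/2}(\E[\hat\rho_n^{4q}])^{1/4}(\E[\|\hat D-D\|_2^{4q}])^{1/4}$, using $\E[\hat\rho_n^{4q}]\le\square_{q,g,N,c}$ from Lemma~\ref{hatrho} and Proposition~\ref{estD} applied with exponent $4q$. With these two repairs your argument matches the paper's proof.
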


1) Note that the upper bound quantifies the additive cost of each step used in the estimation method. 
The first part is an oracle bound on the estimation of $D$ times the size of the estimator of $\lambda$. The second part is  the oracle bound for $N$. Of course, the results are sharp only if $\hat \lambda$ is good, which can be seen through $\ve_{\lambda,n}$. Finally, the bound is also governed by the approximated inversion through the only term where $k$ appears. The last term is just a residual term, that will be in most of the cases negligible with respect to the other terms. In particular since all the previous errors are somehow unavoidable, this means that, as far as our method is concerned, our upper bound is the best possible that can be achieved in order to select the different bandwidths $h$, up to multiplicative constants. Moreover, one can see how the limitation in $k$ influences the method and how large $k$ needs to be chosen to guarantee that the main error comes from the fact that one estimates a derivative. 

2) The result holds for any $n$. In particular, we expect the method to perform well for small sample size, see the numerical illustration below. This also shows that we are able to select a good bandwidth as far as the kernel $K$ is fixed, even if there is no assumption on the moments of $K$ and consequently on the approximation properties of $K$. In the next simulation section, we  focus on a Gaussian kernel which has only one vanishing moment (hence one cannot really consider minimax adaptation for regular function with it) but for which the bandwidth choice is still important in practice. The previous result guarantees an optimal bandwidth choice even for this kernel, up to some multiplicative constant.

3) From this oracle result, we can easily deduce the rates of convergence of Proposition \ref{rate} at the price of further assumptions on the kernel $K$, {\it i.e.} Assumption \ref{HypK2} defined in the Introduction.  Section~1.2.2 of \cite{tsy} recalls how to build compactly supported kernels satisfying Assumption \ref{HypK2}. 
If $m_0$ satisfies Assumption~\ref{HypK2}, then for any $s\leq m_0,$ for any $f\in\mathcal{W}^s$,
$$\|K_{h}\star f-f\|_2\leq C\|f\|_{\mathcal{W}^s}h^{s},$$ where $C$ is a constant that can be expressed by using $K$ and $m_0$ (see Theorem 8.1 of \cite{hkpt}). 
Now, it is sufficient to choose
$h=D^{-1}$ of order $n^{-1/(2s+3)}$ to obtain \eqref{0}. The complete proof of Proposition \ref{rate} is delayed until the last section.
\section{Numerical illustration}\label{simulations}

Let us illustrate our method through some simulations.

\subsection{The numerical protocol}

First, we need to build up simulated data: to do so, we depart from given $g,$ $\kappa$ and $B$ on a regular grid $ [0,dx,\dots,X_M]$ and solve the direct problem by the use of the so-called \emph{power algorithm} to find the corresponding density $N$ and the principal eigenvalue $\lambda$ (see for instance \cite{DPZ} for more details).  We check that $N$ almost vanishes outside the interval $[0,X_M];$ else, $X_M$ has to be increased. The density $N$ being given on  this grid, we approximate it by a spline function, and build a $n$-sample by the rejection sampling algorithm. For the sake of simplicity, we do not simulate an approximation on $\lambda$ and keep the exact value, thus leading, in the estimate of Theorem \ref{oraclesurH}, to $R_{\lambda,n}=\lambda^{2q}$ and $\hat\lambda_n=\lambda.$

\

We then follow step by step the method proposed here and detailed in Section \ref{proposed-section}.
\begin{enumerate}
\item The GL method for the choice of $\hat{N}=\hat{N}_{\bar{h}}$. We take the classical Gaussian kernel $K(x)=(2\pi)^{-1/2}\exp\big(-x^2/2\big)$, set $D_{\max}=n$ and limit ourselves to a logarithmic sampling ${\cal H}=\{1,1/2,\dots,1/9,1/10,1/20,\dots,1/100,1/200,\dots,1/n\}$ in order to reduce the cost of computations (The GL method is indeed the most time-consuming step in the numerical protocol).
\item The GL method for the choice of $\hat{D}_h$. The procedure is similar except that we choose here $D_{max}=\sqrt{n}.$ The selected bandwidths $\bar{h}$ and $\tilde{h}$ can be different. We check that the GL method does not select an extremal point of $\cal H.$
\item The choice of $\hat{\kappa}_n$, as defined by \eqref{def:hatkappan}.
\item The numerical scheme described in Section \ref{sec:inversion} and in \cite{DPZ} for the inversion of $\cal L$.
\item The division by $\hat N$ and definition of $\tilde B$ as described in \eqref{def:Btilde}.
\end{enumerate}
At each step, we compare, in $\mathbb{L}^2$-norm, the reconstructed function and the original one: $\hat N$ \emph{vs} $N,$ $\f{\p}{\p x} (g\hat{N})$ \emph{vs} $\f{\p}{\p x} (gN),$ $\hat{H}$ \emph{vs} $BN$ and finally $\hat{B}$ \emph{vs} B.

\subsection{Results on simulated data}

We first test the three cases simulated in \cite{DPZ} in which the numerical analysis approach was dealt with. Namely on the interval $[0,4]$, we consider the cases where $g\equiv 1$ and first $B=B_1\equiv 1,$ second $B(x)=B_2(x)=1$ for $x\leq 1.5,$ then linear to $B_2(x)=5$ for $x\geq 1.7.$ This particular form is interesting because due to this fast increase on $B,$ the solution $N$ is not that regular and exhibits a 2-peaks distribution (see Figure \ref{fig:B123:1}).
 Finally, we test $B(x)=B_3(x)=\exp(-8(x-2)^2)+1.$ 
\begin{figure}[ht]
\begin{center}
\begin{minipage}{17cm}
\includegraphics[width=7cm, height=7cm]{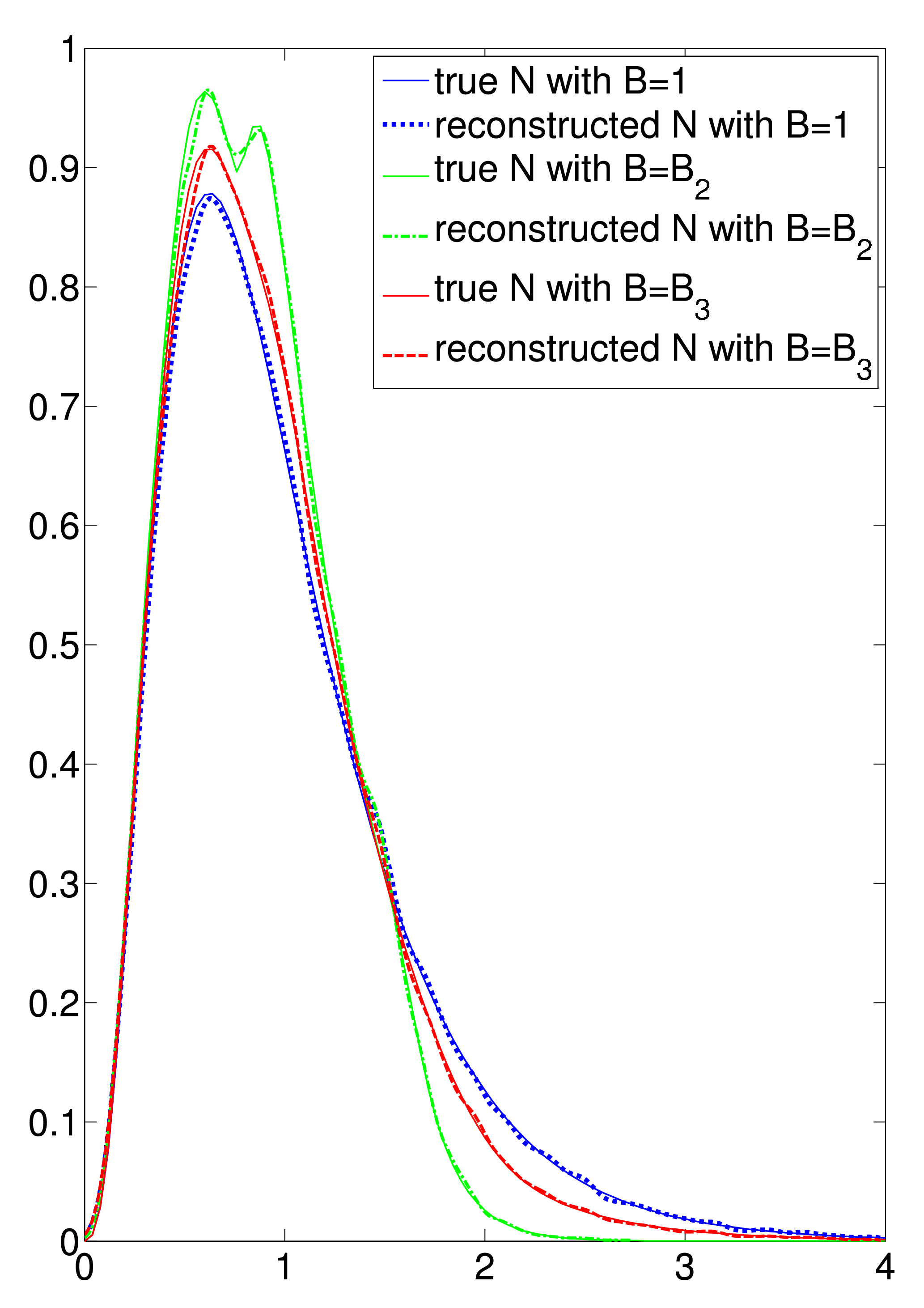} \quad \includegraphics[width=7cm, height=7cm]{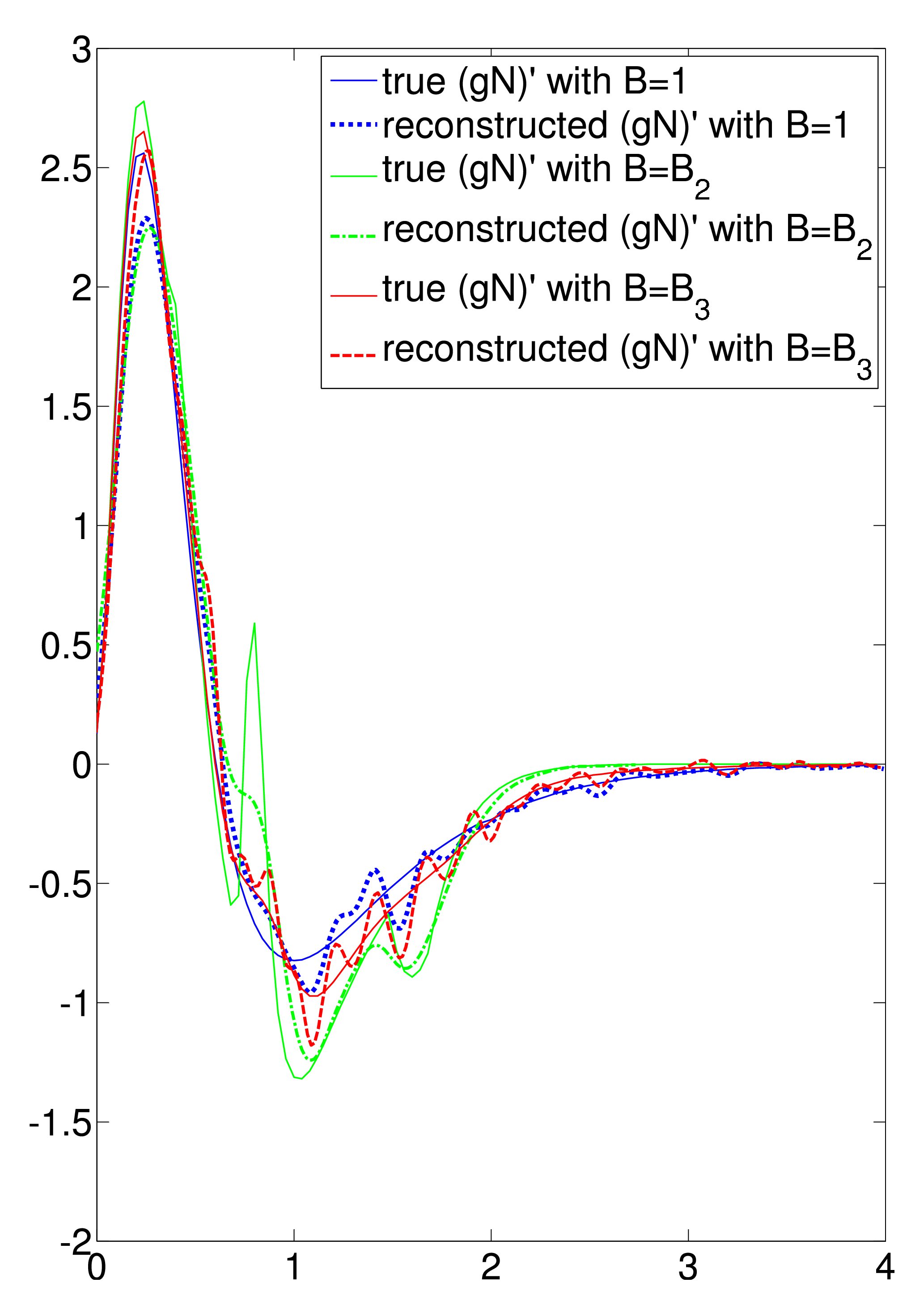}
\end{minipage} \end{center}\vspace{-0.5cm}
\caption{\label{fig:B123:1} Reconstruction of  $N$ (left) and of $\f{\p}{\p x} (gN)$ (right) obtained with a sample of $n=5.10^4$ data, for three different cases of division rates $B.$}
\end{figure}
\begin{figure}[ht]
\begin{center}
\begin{minipage}{17cm}
\includegraphics[width=7cm, height=7cm]{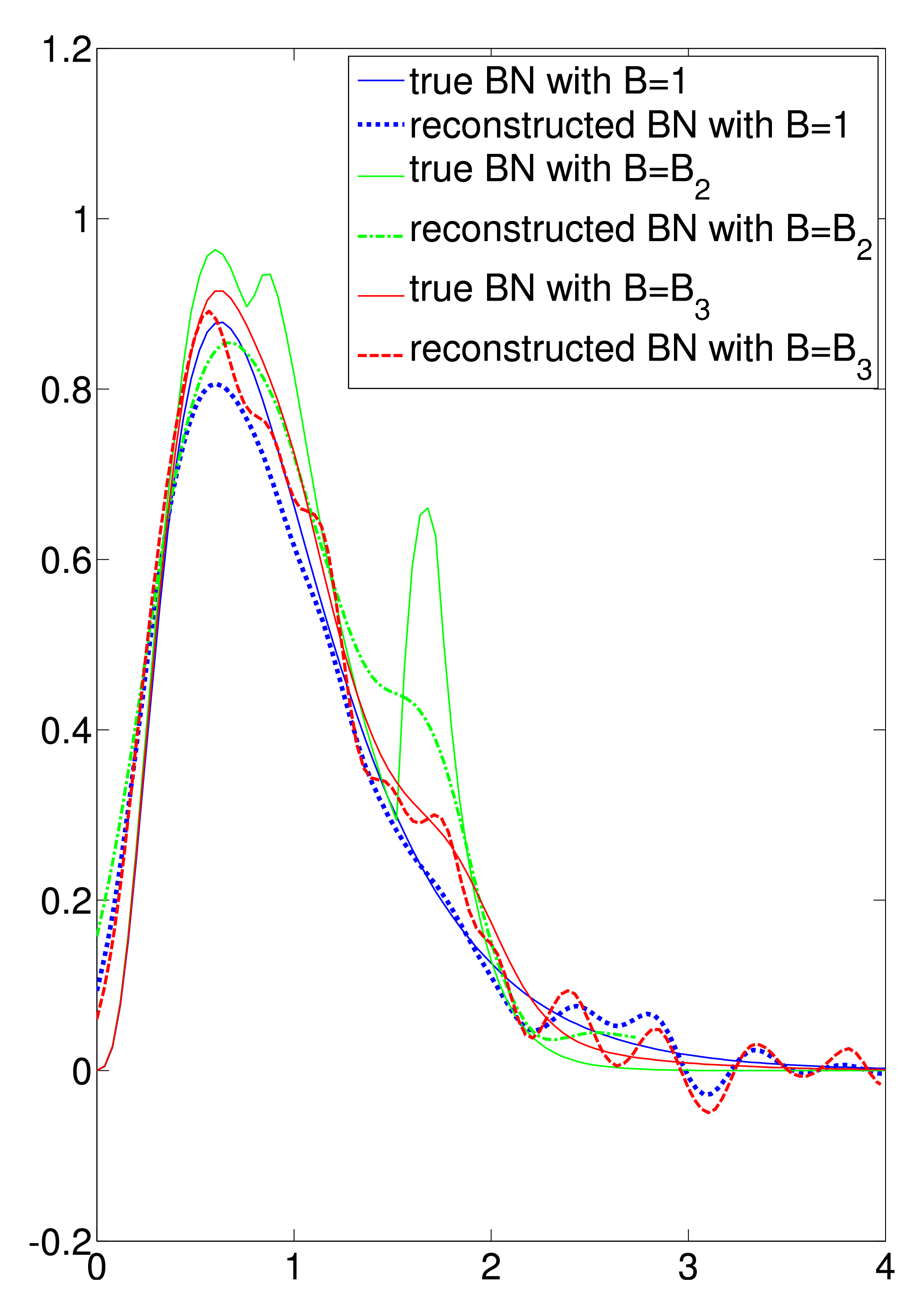} \quad \includegraphics[width=7cm, height=7cm]{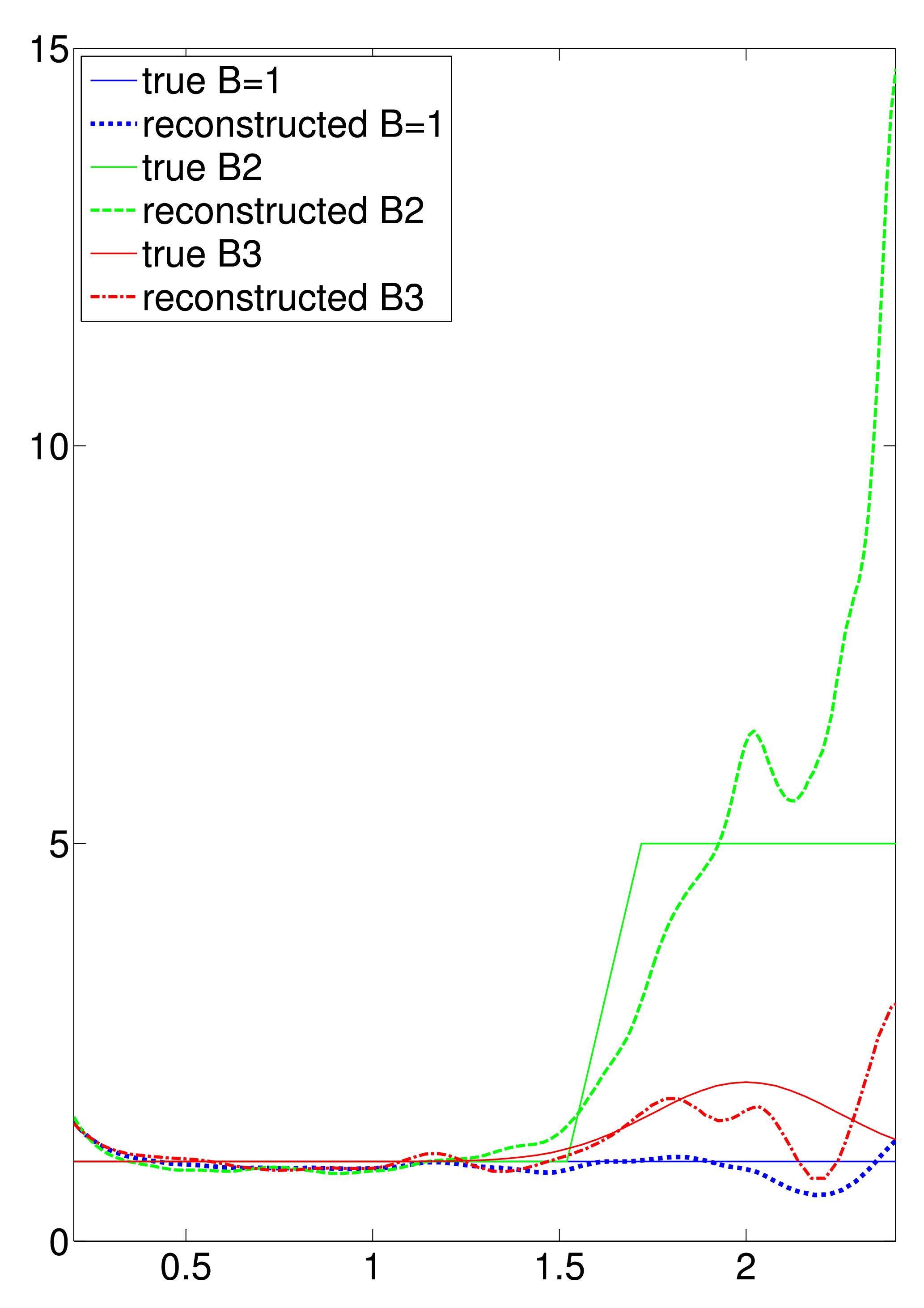}
\end{minipage} 
\end{center}\vspace{-0.5cm}
\caption{\label{fig:B123:2}Reconstruction of  $BN$ (left) and of $B$ (right) obtained with a sample of $n=5.10^4$ data, for three different cases of division rates $B.$ }
\end{figure}

In Figures \ref{fig:B123:1} and \ref{fig:B123:2}, we show the simulation results with $n=5.10^4$ (a realistic value for \emph{in vitro} experiments on {\it E. Coli} for instance) for the reconstruction of $N,$ $\f{\p}{\p x} (gN),$ $BN$ and $B.$

One notes that the solution can well capture the global behavior of the division rate $B,$ but, as expected, has more difficulties in recovering fine details (for instance, the difference between $B_1$ and $B_3$) and also gives much more error when $B$ is less regular (case of $B_2$). One also notes that even if the reconstruction of $N$ is very satisfactory, the critical point is the reconstruction of its derivative.
Moreover, for large values of $x,$ even if $N$ and its derivative are correctly reconstructed, the method fails in finding a proper division rate $B.$ This is due to two facts: first, $N$ vanishes, so the division by $N$ leads to error amplification. Second, the values taken by $B(x)$ for large $x$ have little influence on the solutions $N$ of the direct problem: whatever the values of $B$, the solutions $N$ will not vary much, as shown by Figure \ref{fig:B123:1} (left). A similar phenomenon occurred indeed when solving the deterministic problem in \cite{DPZ} (for instance, we refer to Fig. 10 of this article for a comparison of the results).  
 
We also test a case closer to biological true data, namely the case $B(x)=x^2$ and $g(x)=x.$ The results are shown on Figures \ref{fig:B4:1} and \ref{fig:B4:2} for $n$-samples of size $10^3,$ $5.10^3,$ $10^4$ and $5.10^4.$

\begin{figure}[ht]
\begin{center}
\begin{minipage}{15cm}
\includegraphics[width=7cm, height=7cm]{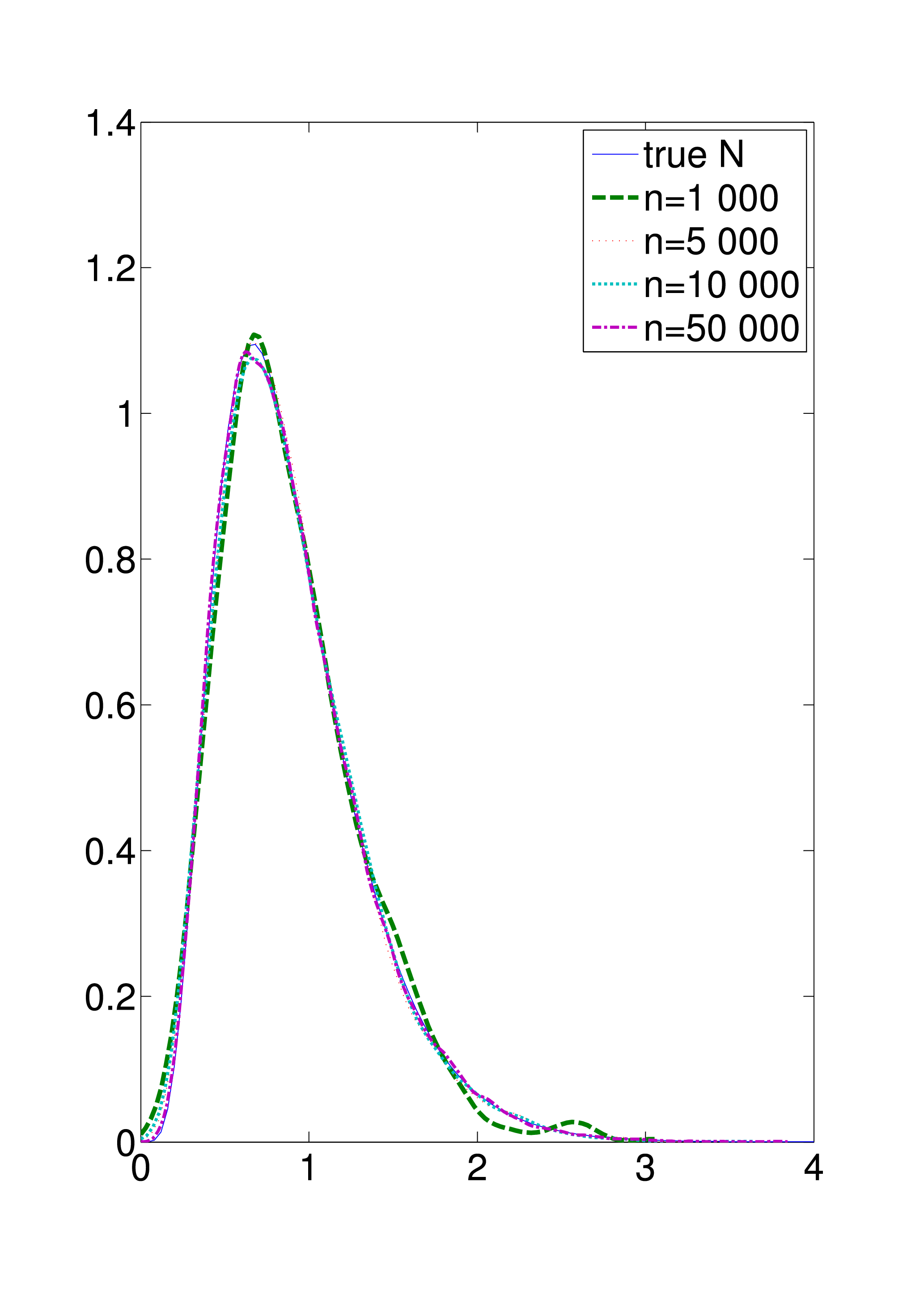} \quad \includegraphics[width=7cm, height=7cm]{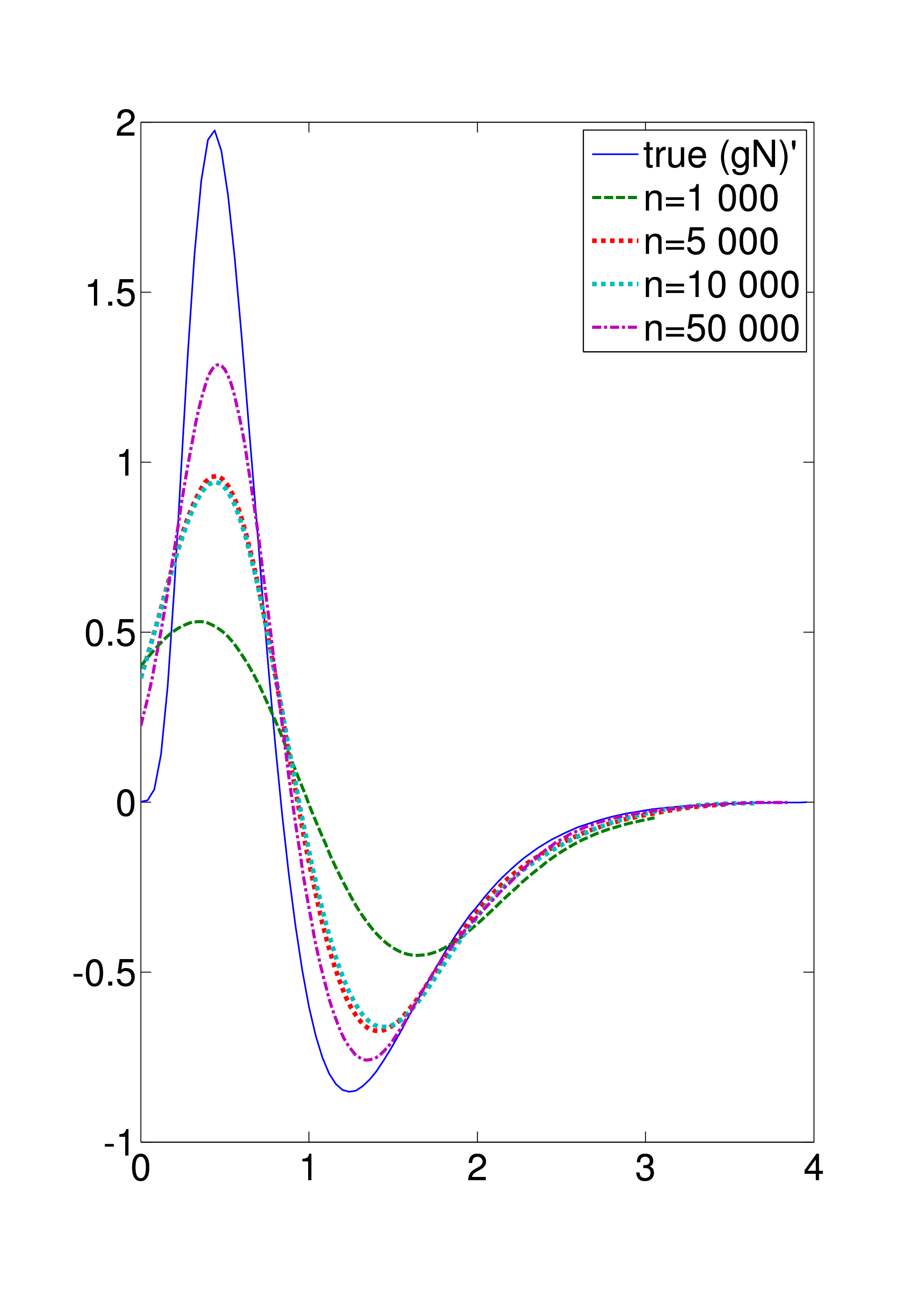}
\end{minipage} \end{center}\vspace{-0.5cm}
\caption{\label{fig:B4:1} Reconstruction of  $N$ (left) and of $\f{\p}{\p x} (gN)$ (right) obtained for $g(x)=x$ and $B(x)=x^2,$ for various sample sizes.}
\end{figure}

\begin{figure}[ht]
\begin{center}
\begin{minipage}{15cm}
\includegraphics[width=7cm, height=7cm]{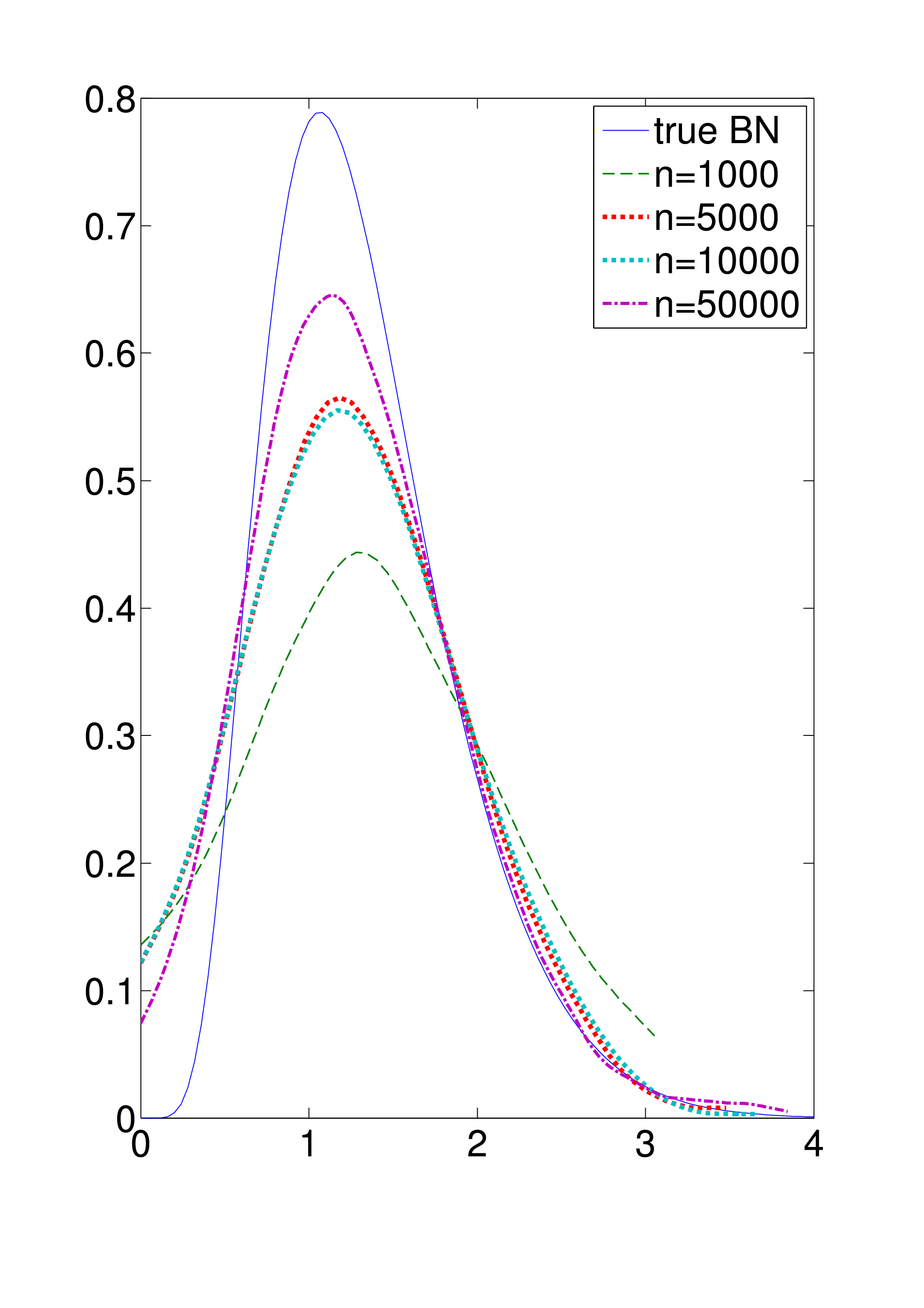} \quad \includegraphics[width=7cm, height=7cm]{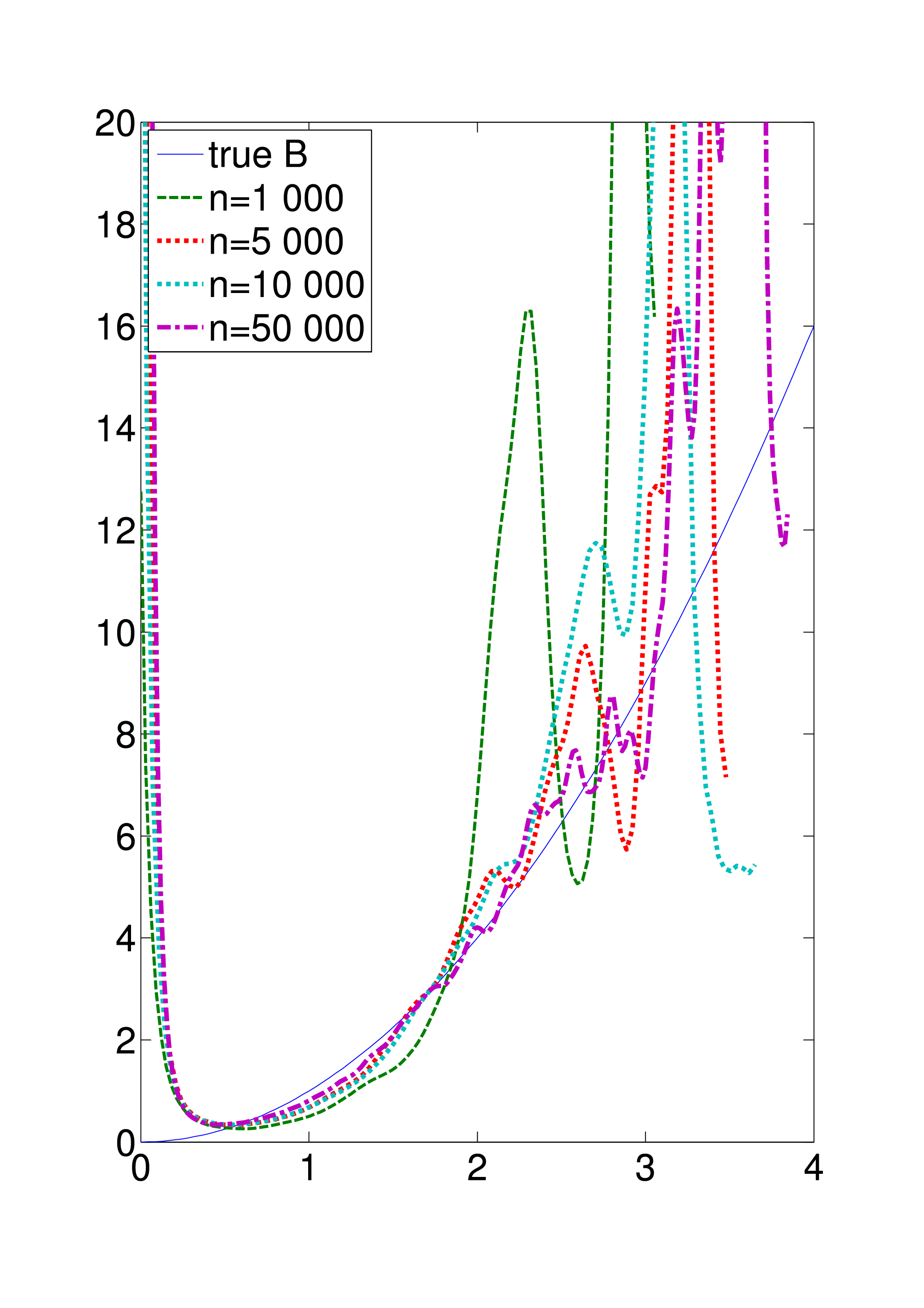}
\end{minipage} \end{center}\vspace{-0.5cm}
\caption{\label{fig:B4:2} Reconstruction of  $BN$ (left) and of $B$ (right) obtained for $g(x)=x$ and $B(x)=x^2,$ for various sample sizes.}
\end{figure}

One notes that reconstruction is already very good for $N$ when $n=10^3,$ unlike the reconstruction of $\f{\p}{\p x} (gN)$ that requires much more data. 

Finally, in Table \ref{tab:num} we give average error results on $50$ simulations, for $n=1000,$ $g\equiv B\equiv 1$. We display the relative errors in $\mathbb{L}^2$ norms, (defined by $||\phi - \hat{\phi}||_{\mathbb{L}^2} /||\phi||_{\mathbb{L}^2}$),  and their empirical variances.
In Table \ref{tab:num2}, for the case $g(x)=x$ and $B(x)=x^2,$ we give some results on standard errors for various values of $n,$ and compare them to $n^{-1/5},$ which is the order of magnitude of the expected final error on $BN,$ since with a Gaussian kernel we have $s=1$ in Proposition \ref{rate}. We see that our numerical results are in line with the theoretical estimates: indeed, the error on $H$ is roughly twice as large as $n^{-1/5}.$\\

\begin{tabular}{cccc}\label{tab:num} 
Error on $N$: average & Variance& Error  on $\f{\p}{\p x} (gN):$ average & Variance
\\
\hline
0.088 & 0.15 & 0.51 & 0.28 \\ 
 Error on $BN$: average & Variance & Average $\hat h$ & Average $\tilde h$  \\
 \hline
0.39 &0.29 &0.12&0.40 
\end{tabular}

\vspace{0.5cm}

\begin{tabular}{ccccccc}\label{tab:num2}
 $n$ & $n^{-\f{1}{5}}$ & $\hat{h}$ & $\tilde{h}$ & error on $N$ & error on $D$ & error on $H$ \\
\hline
$10^3$ & 0.25 & 0.1 & 0.5 & 0.06 & 0.68 & 0.42 \\
$5.10^3$ & 0.18 & 0.07 & 0.3 & 0.03 & 0.45 & 0.28 \\
$10^4$ & 0.16 & 0.08 & 0.3 & 0.035 & 0.46 & 0.29 \\
$5.10^4$ & 0.11 & 0.04 & 0.2 & 0.014 & 0.31 & 0.19 
\end{tabular}

\section{Proofs}\label{proofs-section}
In Section~\ref{proof-main-section}, we first give the proofs of the main results of Section~\ref{proposed-section}. This allows us, in Section \ref{proof-main-main-section}, to prove the results of  Section~\ref{oracle-section}, which require the collection of all the results of Section~\ref{proposed-section}, \emph{i.e.} the oracle-type inequalities on the one hand and a numerical analysis result on the other hand. This illustrates the subject of our paper that lies at the frontier between these fields.  Finally, we state and prove the technical lemmas used in Section~\ref{proof-main-section}. These technical tools are concerned with probabilistic results, namely concentration and Rosenthal-type inequalities that are often the main bricks to establish oracle inequalities, and also the boundedness of ${\cal L}^{-1}_k$. 
In the sequel, the notation $\square_{\theta_1,\theta_2,\ldots}$ denotes a generic positive constant  depending on $\theta_1,\theta_2,\ldots$ (the notation $\square$ simply denotes a generic positive
absolute constant).  It means that the values of $\square_{\theta_1,\theta_2,\ldots}$
may change from line to line.
\subsection{Proofs of the main results of Section \ref{proposed-section}}\label{proof-main-section}
\subsubsection*{Proof of Proposition~\ref{estN}}
For any $h^*\in\HH$, we have:
\begin{eqnarray*}
\|\hat N-N\|_2&\leq&\|\hat N_{\hat h}-N_{\hat h,h^*}\|_2+\|N_{\hat h,h^*}-\hat N_{h^*}\|_2+\|\hat N_{h^*}-N\|_2\\
&\leq &A_1+A_2+A_3,
\end{eqnarray*}
with
\[ A_1:=\|\hat N_{\hat h}-N_{\hat h,h^*}\|_2\leq A(h^*)+\frac{\chi}{\sqrt{n\hat h}}\|K\|_2,\]
\[ A_2:=\|N_{\hat h,h^*}-\hat N_{h^*}\|_2\leq A(\hat h)+\frac{\chi}{\sqrt{nh^*}}\|K\|_2\]
and 
\[ A_3:=\|\hat N_{h^*}-N\|_2.\]
We obtain
\begin{eqnarray}\label{Risk}
\|\hat N-N\|_2&\leq&A(h^*)+\frac{\chi}{\sqrt{n\hat h}}\|K\|_2+ A(\hat h)+\frac{\chi}{\sqrt{nh^*}}\|K\|_2+\|\hat N_{h^*}-N\|_2\nonumber\\
&\leq&2A(h^*)+\frac{2\chi}{\sqrt{nh^*}}\|K\|_2+\|\hat N_{h^*}-N\|_2.
\end{eqnarray}
Since we have
\begin{eqnarray}\label{Ah*}
A(h^*)&=&\sup_{h'\in\HH}\big\{\|\hat N_{h^*,h'}-\hat N_{h'}\|_2-\frac{\chi}{\sqrt{nh'}}\|K\|_2\big\}_+\nonumber\\
&\leq&\sup_{h'\in\HH}\big\{\big\{\|\hat N_{h^*,h'}-\E[\hat N_{h^*,h'}]-\big(\hat N_{h'}-\E[\hat N_{h'}]\big)\|_2-\frac{\chi}{\sqrt{nh'}}\|K\|_2\big\}_+\nonumber\\
&&\hspace{1cm}+\|\E[\hat N_{h^*,h'}]-\E[\hat N_{h'}]\|_2\big\}
\end{eqnarray}
and for any $x$ and any $h'$
\begin{eqnarray*}
 \E\big[\hat N_{h^*,h'}(x))-\E(\hat N_{h'}(x)\big]&=&\int (K_{h^*}\star K_{h'})(x-u)N(u)du-\int K_{h'}(x-v)N(v)dv\\
&=&\int\int K_{h^*}(x-u-t)K_{h'}(t)N(u)dt du-\int K_{h'}(x-v)N(v)dv\\
&=&\int\int K_{h^*}(v-u)K_{h'}(x-v)N(u)dudv-\int K_{h'}(x-v)N(v)dv\\
&=&\int K_{h'}(x-v)\big(\int K_{h^*}(v-u)N(u)du-N(v)\big)dv,
\end{eqnarray*}
we derive
\begin{eqnarray}\label{biais}
\|\E(\hat N_{h^*,h'})-\E(\hat N_{h'})\|_2&\leq&\|K\|_1\|E_{h^*}\|_2,
\end{eqnarray}
where 
\[E_{h^*}(x):=(K_{h^*}\star N)(x)-N(x),\;\;x\in\R_+\]
represents the approximation term. Combining (\ref{Risk}), (\ref{Ah*}) and (\ref{biais}) entails
\begin{eqnarray*}
\|\hat N-N\|_2&\leq&\|\hat N_{h^*}-N\|_2+2\|K\|_1\|E_{h^*}\|_2+\frac{2\chi}{\sqrt{nh^*}}\|K\|_2+2\zeta_n,
\end{eqnarray*}
with 
\begin{eqnarray*}
\zeta_n&:=&\sup_{h'\in\HH}\big\{\|\hat N_{h^*,h'}-\E[\hat N_{h^*,h'}]-\big(\hat N_{h'}-\E[\hat N_{h'}]\big)\|_2-\frac{\chi}{\sqrt{nh'}}\|K\|_2\big\}_+\\
&=&\sup_{h'\in\HH}\big\{\|K_{h^*}\star\big(\hat N_{h'}-\E[\hat N_{h'}]\big)-(\hat N_{h'}-\E[\hat N_{h'}])\|_2-\frac{(1+\e)(1+\|K\|_1)}{\sqrt{nh'}}\|K\|_2\big\}_+\\
&\leq &(1+\|K\|_1)\sup_{h'\in\HH}\big\{\|\hat N_{h'}-\E[\hat N_{h'}]\|_2-\frac{(1+\e)}{\sqrt{nh'}}\|K\|_2\big\}_+.
\end{eqnarray*}
Hence
\[
\E\big[\|\hat N-N\|_2^{2q}\big]\leq\square_q\big(\E\big[\|\hat N_{h^*}-N\|_2^{2q}\big]+\norme{K}_1^{2q}\|E_{h^*}\|_2^{2q}+\chi^{2q}\frac{\|K\|_2^{2q}}{(nh^*)^q}+(1+\norme{K}_1)^{2q}\E[\xi_n^{2q}]\big),
\]
where
\[\xi_n=\sup_{h'\in\HH}\big\{\|\hat N_{h'}-\E(\hat N_{h'})\|_2-\frac{(1+\e)}{\sqrt{nh'}}\|K\|_2\big\}_+.\]
Now, we have:
\begin{eqnarray*}
\E\big[\|\hat N_{h^*}-N\|_2^{2q}\big]&\leq&2^{2q-1}\big( \E\big[\|\hat N_{h^*}-\E[\hat N_{h^*}]\|_2^{2q}\big]+\|\E[\hat N_{h^*})-N\|_2^{2q}\big]\\
&\leq&2^{2q-1}\big( \E\big[\|\hat N_{h^*}-\E[\hat N_{h^*}]\|_2^{2q}\big]+\|E_{h^*}\|_2^{2q}\big).
\end{eqnarray*}
Then, by setting
\[Kc_{h^*}(X_i,x):= K_{h^*}(x-X_i)-\E(K_{h^*}(x-X_1)),\]
we obtain
\begin{eqnarray*}
 \E\big[\|\hat N_{h^*}-\E[\hat N_{h^*}]\|_2^{2q}\big]&=&\E\Big[\Big(\int \Big(\frac{1}{n}\sum_{i=1}^n Kc_{h^*}(X_i,x)\Big)^2dx\Big)^q\Big]\\
&\leq&\frac{2^{q-1}}{n^{2q}}\Big(\E\Big[\Big(\sum_{i=1}^n\int Kc_{h^*}^2(X_i,x)dx\Big)^q\Big]\big.\\
&&\hspace{1cm}+\big.\E\Big[\Big|\sum_{1\leq i,j\leq n\ i\not=j}\int Kc_{h^*}(X_i,x)Kc_{h^*}(X_j,x)dx\Big|^q\Big]\Big).
\end{eqnarray*}
Since
\begin{eqnarray*}
\int Kc_{h^*}^2(X_i,x)dx&=&\int\Big(K_{h^*}(x-X_i)-\E\big[K_{h^*}(x-X_1)\big]\Big)^2dx\\
&\leq&2\big(\int K_{h^*}^2(x-X_i)dx+\int\Big(\E\big[K_{h^*}(x-X_1)\big]\Big)^2dx\big)\\
&\leq&2\big(\|K_{h^*}\|_2^2+\int\E\big[K_{h^*}^2(x-X_1)\big]dx\big)\\
&\leq&4\|K_{h^*}\|_2^2=\frac{4}{h^*}\|K\|_2^2,
\end{eqnarray*}
the first term can be bounded as follows
\[\E\big[\big(\int \sum_{i=1}^n Kc_{h^*}^2(X_i,x)dx\big)^q\big]\leq\big(\frac{4n}{h^*}\|K\|_2^2\big)^q.\]
For the second term, we apply Theorem~8.1.6 of de la Pe{\~n}a and Gin{\'e} (1999) (with $2q\geq2$) combined with the Cauchy-Schwarz inequality:
\begin{align*}
& \E\Big[\Big|\sum_{1\leq i,j\leq n\ i\not=j}\int Kc_{h^*}(X_i,x) Kc_{h^*}(X_j,x)dx\Big|^q\Big]\\
\leq\; & \Big(\E\Big[\big| \sum_{1\leq i,j\leq n\ i\not=j}\int Kc_{h^*}(X_i,x)Kc_{h^*}(X_j,x)dx\big|^{2q}\Big]\Big)^{\frac{1}{2}}\\
\leq\;  & \square_qn^{q}\Big(\E\big[\big| \int Kc_{h^*}(X_1,x) Kc_{h^*}(X_2,x)dx\big|^{2q}\big]\Big)^{\frac{1}{2}}\\
\leq\; & \square_qn^{q}\Big(\E\big[\big| \int Kc_{h^*}^2(X_1,x)dx\big|^{2q}\big]\Big)^{\frac{1}{2}}
\leq\; \square_q\big(\frac{4n}{h^*}\|K\|_2^2\big)^q.
\end{align*}
It remains to deal with the term $\E(\xi_n^{2q})$.
By Lemma~\ref{concentration} below, we obtain 
\[\E[\xi_n^{2q}]\leq \square_{q, \eta, \delta \norme{K}_2, \norme{K}_1, \norme{N}_\infty} n^{-q}\]
and the conclusion follows.
\subsubsection*{Proof of Proposition \ref{estD}}
The proof is similar to the previous one and we avoid most of the computations for simplicity. For any $h_0\in~\tilde\HH$,
\begin{eqnarray*}
\|\hat D_{\tilde h}-D\|_2&\leq&\|\hat D_{\tilde h}-\hat D_{\tilde h,h_0}\|_2+\|\hat D_{\tilde h,h_0}-\hat D_{h_0}\|_2
+\|\hat D_{h_0}-D\|_2\\
&\leq&\tilde A_1+\tilde A_2+\tilde A_3,
\end{eqnarray*}
with
\[\tilde A_1:=\|\hat D_{\tilde h}-\hat D_{\tilde h,h_0}\|_2\leq \tilde A(h_0)+\frac{\tilde\chi}{\sqrt{n\tilde h^3}}\|g\|_\infty\|K'\|_2,\]
\[\tilde A_2:=\|\hat D_{\tilde h,h_0}-\hat D_{h_0}\|_2\leq \tilde A(\tilde h)+\frac{\tilde\chi}{\sqrt{nh_0^3}}\|g\|_\infty\|K'\|_2\]
and
\[\tilde A_3:=\|\hat D_{h_0}-D\|_2.\]
Then,
\[\|\hat D_{\tilde h}-D\|_2\leq 2\tilde A(h_0)+\frac{2\tilde\chi}{\sqrt{nh_0^3}}\|g\|_\infty\|K'\|_2
+\|\hat D_{h_0}-D\|_2.\]
To study $\tilde A(h_0)$, we first evaluate 
\begin{eqnarray*}
\E[\hat D_{h_1,h_2}(x)]-\E[\hat D_{h_2}(x)].&=&(K_{h_1}\star K_{h_2}\star (gN)')(x)- (K_{h_2}\star (gN)')(x)\\
&=&\int D(t)(K_{h_1}\star K_{h_2})(x-t)dt-\int D(t)K_{h_2}(x-t)dt\\
&=&\int D(t)\int K_{h_1}(x-t-u)K_{h_2}(u)dudt-\int D(t)K_{h_2}(x-t)dt\\
&=&\int D(t)\int K_{h_1}(v-t)K_{h_2}(x-v)dvdt-\int D(v)K_{h_2}(x-v)dv\\
&=&\int K_{h_2}(x-v)\Big(\int D(t)K_{h_1}(v-t)dt-D(v)\Big)dv\\
&=&(K_{h_2}\star \tilde E_{h_1})(x),
\end{eqnarray*}
where we set, for any real number $x$
\begin{eqnarray}\label{biaisD}
\tilde E_{h_1}(x)&:=&\int D(t)K_{h_1}(x-t)dt-D(x)\nonumber\\
&=&(K_{h_1}\star D)(x)-D(x).
\end{eqnarray}
It follows that
\begin{eqnarray}\label{tildeA}
\tilde A(h_0)&=&\sup_{h\in\tilde\HH}\big\{\|\hat D_{h_0,h}-\hat D_{h}\|_2-\frac{\tilde\chi}{\sqrt{nh^3}}\|g\|_\infty\|K'\|_2\big\}_+\nonumber\\
&\leq&\sup_{h\in\tilde\HH}\big\{\big\{\|\hat D_{h_0,h}-\E[\hat D_{h_0,h}]-\big(\hat D_{h}-\E[\hat D_{h}]\big)\|_2-\frac{\tilde\chi}{\sqrt{nh^3}}\|g\|_\infty\|K'\|_2\big\}_+\big.\nonumber\\
&&\hspace{1cm}+\big.\|\E[\hat D_{h_0,h}]-\E[\hat D_{h}]\|_2\big\}\nonumber\\
&\leq &\sup_{h\in\tilde\HH}\big\{\|\hat D_{h_0,h}-\E[\hat D_{h_0,h}]-(\hat D_{h}-\E[\hat D_{h}])\|_2-\frac{\tilde\chi}{\sqrt{nh^3}}\|g\|_\infty\|K'\|_2\big\}_++\|K\|_1\|\tilde E_{h_0}\|_2\nonumber\\
&\leq & (1+\|K\|_1)\sup_{h\in\tilde\HH}\big\{\|\hat D_{h}-\E[\hat D_{h}]\|_2-\frac{(1+\tilde\e)}{\sqrt{nh^3}}\|g\|_\infty\|K'\|_2\big\}_++\|K\|_1\|\tilde E_{h_0}\|_2,
\end{eqnarray}
In order to obtain the last line, we use the following chain of arguments:
\begin{eqnarray*}
\hat D_{h_0,h}(x)&=&\frac{1}{n}\sum_{i=1}^ng(X_i)\int K_h'(x-X_i-t)K_{h_0}(t)dt\\
&=&\int K_{h_0}(t)\Big(\frac{1}{n}\sum_{i=1}^ng(X_i) K_h'(x-X_i-t)\Big)dt
\end{eqnarray*}
and
\begin{eqnarray*}
\E\big[\hat D_{h_0,h}(x)\big]&=&\int K_{h_0}(t) \Big(\int g(u)K_h'(x-u-t)N(u)du \Big)dt,
\end{eqnarray*}
therefore
\[
\hat D_{h_0,h}(x)-\E\big[\hat D_{h_0,h}(x)\big]=\int K_{h_0}(t)G(x-t)dt=K_{h_0}\star G(x),
\]
with
\begin{eqnarray*}
G(x)&=&\frac{1}{n}\sum_{i=1}^ng(X_i) K_h'(x-X_i)-\int g(u)K_h'(x-u)N(u)du\\
&=&\hat D_h(x)-\E\big[\hat D_h(x)\big].
\end{eqnarray*}
Therefore
\begin{eqnarray*}
\|\hat D_{h_0,h}-\E[\hat D_{h_0,h}]\|_2&\leq& \|K_{h_0}\|_1\|G\|_2\\
&\leq&\|K\|_1\|\hat D_h-\E[\hat D_h]\|_2,
\end{eqnarray*}
which justifies (\ref{tildeA}). In the same way as in the proof of Proposition~\ref{estN}, we can establish the following:
\[\E\big[\|\tilde E_{h_0}\|_2^{2q}\big]=\E\big[\|\hat D_{h_0}-D\|_2^{2q}\big]\leq \square_q \Big(\|\tilde E_{h_0}\|_2^{2q}+\Big(\frac{\norme{g}_\infty\|K'\|_2}{\sqrt{nh_0^3}}\Big)^{2q}\Big).\]
Finally,  we successively apply (\ref{biaisD}), (\ref{tildeA}) and Lemma~\ref{concentration} in order to conclude the proof.
\subsubsection*{Proof of Proposition \ref{stabiliteL2}}
We use the notation and definitions of Section \ref{sec:inversion}. We have
$$ \|{\mathcal L}^{-1}_k(\varphi)-{\mathcal L}^{-1}(\varphi)\|_{2,T}^2
=
\sum\limits_{i=0}^{k-1} \int\limits_{x_{i,k}}^{x_{i+1,k}} \big(H_{i,k} - {\cal L}^{-1} (\varphi) (x)\big)^2 dx:=\sum\limits_{i=0}^{k-1} L_{i,k}.$$
We prove by induction that for all $i,$ one has $L_{i,k} \leq C^2 \f{T^2}{k^2} \|\varphi\|_{ \mathcal{W}^{1}}^2$. The result follows by summation over $i.$ We first prove the two following estimates:
\begin{eqnarray}
& \int\limits_{x_{i,k}}^{x_{i+1,k}} \big(\varphi_{i,k} - \varphi(x)\big)^2 dx \leq \f{T^2}{4\pi^2 k^2}\|\varphi\|^2_{{\cal W}^1}, \\ \label{ass:delta} 
& |\varphi_{i+1,k} - \varphi_{i,k}|^2 \leq \f{T}{k}  \|\varphi\|^2_{{\cal W}^1}. \label{ass:diff}
\end{eqnarray}
By definition,  $\varphi_{i,k}$ is the average of the function $\varphi$ on the interval $[x_{i,k},\;x_{i+1,k}]$ of size $\f{T}{k}.$ Thus \eqref{ass:delta} is simply Wirtinger inequality applied to $\varphi\in{\cal W}^1$ on the interval $[x_{i,k},\;x_{i+1,k}]. For $\eqref{ass:diff}, we use the Cauchy-Schwarz inequality:
\begin{align*}
|\varphi_{i+1,k} -\varphi_{i,k}|^2&=\f{k^2}{T^2} \biggl(\int\limits_{x_{i,k}}^{x_{i+1,k}} \big(\varphi(x+\f{T}{k})-\varphi(x) \bigr) dx\biggr)^2= \f{k^2}{T^2} \bigl(\int\limits_{x_{i,k}}^{x_{i+1,k}} \int\limits_x^{x+\f{T}{k}} \varphi'(z)dz\,dx\bigr)^2 
\\
&\leq \f{k^2}{T^2} \biggl(\int\limits_{x_{i,k}}^{x_{i+1,k}} \sqrt{\f{T}{k}} \| \varphi\|_{{\cal W}^1} dx\biggr)^2 = 
\f{T}{k} \| \varphi\|_{{\cal W}^1}^2.
\end{align*}
We are ready to prove by induction the two following inequalities:
\begin{eqnarray}
 & L_{i,k} \leq C_1^2 \f{T^2}{k^2} \|\varphi\|_{{\cal W}^1}^2,  \label{eq 1}\\ 
 & |H_{i+1,k} -H_{i,k}|^2 \leq C_2^2 \f{T}{k}  \|\varphi\|_{{\cal W}^1}^2. \label{eq 2}
\end{eqnarray}
for two constants $C_1$ and $C_2$ specified later on. First, for $i=0,$ we have
$$L_{0,k}=\int\limits_0^{\f{T}{k}}|H_{0,k} (\varphi) - {\cal L}^{-1} (\varphi) (x)|^2 dx=\int\limits_0^{\f{T}{k}}\big|\f{1}{3} \varphi_{0,k} - {\cal L}^{-1} (\varphi) (x)\big|^2 dx.$$
We recall  (see Proposition A.1. of \cite{DPZ})  that ${\cal L}^{-1} (\varphi) (x) =\sum\limits_{n=1}^\infty 2^{-2n} \varphi(2^{-n}x),$
and we use the fact that $\f{1}{3} =\sum\limits_{n=1}^\infty 2^{-2n}$ and for $a,b>0$, $ab\leq\frac{1}{2}(a^2+b^2)$ in order to write
\begin{align*}
L_{0,k}&=\int\limits_0^{\f{T}{k}}\big|\sum\limits_{n=1}^\infty 2^{-2n} \big(\varphi_{0,k} - \varphi(2^{-n} x)\big)\big|^2 dx
\leq  \sum\limits_{n,n'=1}^\infty 2^{-2n-2n'} \int\limits_0^{\f{T}{k}}| \varphi_{0,k} - \varphi(2^{-n} x) |^2 dx
\\
&\leq \f{1}{3} \sum\limits_{n=1}^\infty 2^{-n} \int\limits_0^{2^{-n}\f{T}{k}} | \varphi_{0,k} - \varphi(y) |^2 dy \leq \f{1}{3} \f{T^2}{4\pi^2 k^2}\|\varphi \|_{{\cal W}^1}^2.
\end{align*}
This proves the first induction assumption for $i=0,$ and
$$|H_{1,k} - H_{0,k}|^2=\big|\f{1}{7} (\varphi_{1,k} - \varphi_{0,k})\big|^2 \leq \f{1}{7^2} \f{T}{k} \|\varphi\|_{{\cal W}^1}^2,$$
proves the second one. Let us now suppose that the two induction assumptions are true for all $j\leq i-1,$ and take $i\geq 1.$ Let us first evaluate
$$L_{i,k}=\int\limits_{x_{i,k}}^{x_{i+1,k}} \big(H_{i,k} - {\cal L}^{-1}(\varphi)(x)\big)^2 dx=
\f{1}{16} \int\limits_{x_{i,k}}^{x_{i+1,k}} \big(H_{\f{i}{2},k} +\varphi_{\f{i}{2},k} - {\cal L}^{-1}(\varphi)(\f{x}{2}) 
 - \varphi(\f{x}{2})\big)^2 dx.$$
We distinguish the case when $i$ is even and when $i$ is odd. Let $i$ be even: then, by definition
$$L_{i,k} \leq \f{1}{8} 
\int\limits_{x_{i,k}}^{x_{i+1,k}} \big(H_{\f{i}{2},k}  - {\cal L}^{-1}(\varphi)(\f{x}{2})\big)^2 dx
+ \f{1}{8} \int\limits_{x_{i,k}}^{x_{i+1,k}} \big(\varphi_{\f{i}{2},k}- \varphi(\f{x}{2})\big)^2 dx
 \leq \f{1}{4}(C_1^2+\f{1}{4\pi^2})\f{T^2}{k^2} \|\varphi\|^2_{{\cal W}^1}$$
by the induction assumption and Assertion \ref{ass:delta} on $\varphi$ for $j=\f{i}{2}.$ If $i$ is odd, we write by definition
$$L_{i,k}=
\f{1}{16} \int\limits_{x_{i,k}}^{x_{i+1,k}} \biggl(H_{\f{i-1}{2},k} +\varphi_{\f{i-1}{2},k} - {\cal L}^{-1}(\varphi)(\f{x}{2}) 
 - \varphi(\f{x}{2})
+\f{1}{2}(H_{\f{i+1}{2},k} -  H_{\f{i-1}{2},k}) +\f{1}{2}(\varphi_{\f{i+1}{2},k} -\varphi_{\f{i-1}{2},k}) 
\biggr)^2 dx.$$
Hence, re-organizing terms, we can write
\begin{align*} L_{i,k}&\leq
\f{1}{2} \int\limits_{x_{\f{i-1}{2},k}}^{x_{\f{i-1}{2}+1,k}} \biggl(H_{\f{i-1}{2},k} 
- {\cal L}^{-1}(\varphi)({y})\biggr)^2 dy
+\f{1}{2}\int\limits_{x_{\f{i-1}{2},k}}^{x_{\f{i-1}{2}+1,k}} 
\biggl(\varphi_{\f{i-1}{2},k}   - \varphi({y})\biggr)^2 dy \\
&+\f{1}{16}\f{T}{k} (H_{\f{i+1}{2},k} -  H_{\f{i-1}{2},k})^2 +\f{1}{16}\f{T}{k}(\varphi_{\f{i+1}{2},k} -\varphi_{\f{i-1}{2},k})^2 
\end{align*}
Putting together the four inequalities above (the estimates for $\varphi$ and the induction assumptions), we obtain
$$L_{i,k}\leq \f{T^2}{k^2} \|\varphi\|^2_{{\cal W}^1} \biggl(\f{C_1^2}{2} + \f{1}{8\pi^2} + \f{C_2^2}{16} + \f{1}{16}\biggr)$$
and \eqref{eq 1} is proved. It remains to establishe \eqref{eq 2}. Let us write it for $i$ even (the case of an odd $i$ is similar):
$$|H_{i+1,k} - H_{i,k}|^2=\f{1}{16} |H_{\f{i+1}{2}} - H_{\f{i}{2}} + \varphi_{\f{i+1}{2}} - \varphi_{\f{i}{2}}|^2
= \f{1}{32} |H_{\f{i}{2} +1} - H_{\f{i}{2}} + \varphi_{\f{i}{2}+1} - \varphi_{\f{i}{2}}|^2.$$
Hence, as previously, we obtain
 $$|H_{i+1,k} - H_{i,k}|^2 \leq \f{1}{16} \f{T}{k}\|\varphi\|_{{\cal W}^1}^2(C_2^2+1).$$
To complete the proof, we remark that  $C_2^2=\f{1}{15}$ and $C_1^2=\f{1}{4\pi^2} +\f{1}{8}(1+\f{1}{15}) <\f{1}{6}$ are suitable. It is consequently sufficient to take $C=C_1$.
\subsection{Proof of Theorem \ref{oraclesurH} and Proposition \ref{rate}}\label{proof-main-main-section}
\subsubsection*{Proof of Theorem \ref{oraclesurH}}
It is easy to see that
\begin{eqnarray*}
\|\widehat H-H\|_{2,T}&=&\|{\mathcal L}^{-1}_k(\hat\kappa\hat D+\hat\lambda_n\hat N)-{\mathcal L}^{-1}({\mathcal L}(BN))\|_{2,T}\\
&\leq&\|{\mathcal L}^{-1}_k(\hat\kappa\hat D+\hat\lambda_n\hat N)-{\mathcal L}^{-1}_k({\mathcal L}(BN))\|_{2,T}\\
&&\hspace{1.5cm}+\|{\mathcal L}^{-1}_k({\mathcal L}(BN))-{\mathcal L}^{-1}({\mathcal L}(BN))\|_{2,T}\\
&\leq&\|{\mathcal L}^{-1}_k(\hat\kappa \hat D+\hat\lambda_n\hat N - (\kappa D+\lambda N))\|_{2,T}
\\
&&\hspace{1.5cm}+\f{1}{3}\f{T}{\sqrt{k}}\|{\cal L} (BN)\|_{{\cal W}^1},
\end{eqnarray*}
thanks to Proposition \ref{stabiliteL2}. Note that ${\mathcal L}(BN)=\kappa (gN)'+\lambda N$ so that we can write 
$$\|{\cal L} (BN)\|_{{\cal W}^1} \leq C (\|N\|_{{\cal W}^1} +\|g N\|_{{\cal W}^2}).$$ 
We obtain, thanks to Lemma \ref{lk-1} that gives the boundedness of the operator ${\cal L}_k^{-1}:$
\begin{eqnarray*}
\|\widehat H-H\|_{2,T}&\leq &\square\big(\norme{\hat\kappa_n\hat D-\kappa D}_{2,T}+\norme{\hat\lambda_n\hat N-\lambda N}_{2,T}
+(\|N\|_{{\cal W}^1} +\|g N\|_{{\cal W}^2})\f{T}{\sqrt{k}}\big)\\
&\leq & \square \big(|\hat\kappa_n|\|\hat D-D\|_2+|\hat\lambda_n|\|\hat N-N\|_2+|\hat\kappa_n-\kappa|\|D\|_2+|\hat\lambda_n-\lambda|\|N\|_2\big.\\
&&\hspace{2cm}\big.+  (\|N\|_{{\cal W}^1} +\|g N\|_{{\cal W}^2})\f{T}{\sqrt{k}}
\big)\\
&\leq & \square \big(|\hat\lambda_n||\hat\rho_n|\|\hat D-D\|_2+|\hat\lambda_n|\big(\|\hat N-N\|_2+|\hat\rho_n-\rho_g(N)|\|D\|_2\big)\big.\\
&&\hspace{1cm}\big.+\big(\|N\|_2+|\rho_g(N)|\|D\|_2\big)|\hat\lambda_n-\lambda|+(\|N\|_{{\cal W}^1} +\|g N\|_{{\cal W}^2})\f{T}{\sqrt{k}}\big).
\end{eqnarray*}
Taking expectation and using Cauchy-Schwarz inequality,  we obtain for any $q\geq 1$,
{\small
\begin{eqnarray*}
\E[\|\widehat H-H\|_{2,T}^q]&\leq & \square_q\Big[\big(\E[\hat\lambda_n^{2q}]\big)^{1/2}\Big\{\big(\E[{\hat\rho_n}^{4q}]\big)^{1/4}\big(\E[\|\hat{D}-D\|_2^{4q}]\big)^{1/4}+\big(\E[\|\hat{N}-N\|_2^{2q}]\big)^{1/2} \\
&&\big.+\|D\|_{2}^{q}\big(\E[|\hat\rho_n-\rho_g(N)|^{2q}]\big)^{1/2}\big\}\\
&&\big. +\big(\|N\|_{2}+\rho_g(N) \norme{D}_{2}\big) ^q\E[|\hat\lambda_n-\lambda|^q]+ \big((\|N\|_{{\cal W}^1} +\|g N\|_{{\cal W}^2})\f{T}{\sqrt{k}}\big)^q\big].
\end{eqnarray*}
}
Now, Lemma~\ref{hatrho} gives the behaviour of $\E[|\hat\rho_n-\rho_g(N)|^{2q}]$. In particular, we obtain
\[\E[{\hat\rho_n}^{4q}] \leq \square_{q,g,N,c}.\] 
We finally apply successively Propositions \ref{estN} and \ref{estD} to obtain the proof of Theorem \ref{oraclesurH}.
\subsubsection*{Proof of Proposition \ref{rate}}
\label{proof-section-B}
We have already proved \eqref{0}. It remains to prove \eqref{1}.
We introduce the event
$$\Omega_n=\{2\hat{N}(x)\geq m \mbox{ for any } x\in[a,b]\}.$$
Then, for $n$ larger that $Q^2$,
\begin{eqnarray*}
& &\E\Big[\Big(\int_a^b \big(\tilde{B}(x)-B(x)\big)^2 dx\Big)^{\frac{q}{2}}\Big]\\
&=&\E\Big[\Big(\int_a^b \big(\tilde{B}(x)-B(x)\big)^2 dx \times {\bf 1}_{\Omega_n}\Big)^{\frac{q}{2}}\Big]+\E\big[\big(\int_a^b \big(\tilde{B}(x)-B(x)\big)^2 dx \times {\bf 1}_{\Omega_n^c}\big)^{\frac{q}{2}}\big]\\
&\leq&\E\Big[\Big(\int_a^b \big({\hat B}(x)-B(x)\big)^2 dx\times {\bf1}_{\Omega_n}\Big)^{\frac{q}{2}}\Big]+\big(2(b-a)(n+Q^2)\big)^{\frac{q}{2}}\PP(\Omega_n^c)\\
&\leq&\E\Big[\Big(\int_a^b \Big(\frac{\hat H}{\hat N}-\frac{H}{N}\Big)^2\times {\bf 1}_{\Omega_n}\Big)^{\frac{q}{2}}\Big] +\big(4n(b-a)\big)^{\frac{q}{2}}\PP(\Omega_n^c)\\
&\leq& \E\Big[\Big(\int_a^b \Big(\frac{\hat H N- \hat N H}{\hat N N}\Big)^2\times {\bf 1}_{\Omega_n}\Big)^{\frac{q}{2}} \Big]+\big(4n(b-a)\big)^{\frac{q}{2}}\PP(\Omega_n^c)\\
&\leq & \square_{q,m,M,Q} \big( \E\big[ \norme{\hat H -H}^q_{2}\big]+ \E\big[ \norme{\hat N -N}^q_2\big] \big)+\big(4n(b-a)\big)^{\frac{q}{2}}\PP(\Omega_n^c).
\end{eqnarray*}
The first term of the right hand side is handled by \eqref{0} and Proposition~\ref{estN}. The second term is handled by Lemma~\ref{bar} that establishes that $\PP(\Omega_n^c)=O(n^{-q}).$
\subsection{Technical lemmas}\label{technique}
\subsubsection*{Concentration inequalities}
We first state the following concentration result. Note that a more general version of this result can be found in \cite{LepsP}. We nevertheless give a proof for the sake of completeness.
\begin{lemma}\label{concentration}
We have the following estimates
\begin{itemize}
\item Assume that $\norme{K}_2, \norme{K}_1, \norme{g}_\infty$ and $\norme{N}_\infty$ are finite. For every $q>0$,
introduce the grid $\mathcal{H}\subset\{1,1/2,..., 1/D_{\max}\}$ and $D_{\max}=\delta n$ for some $\delta>0$. Then, for every $\eta>0$
\[\E\big[\sup_{h\in\HH}\big\{\|\hat N_{h}-\E[\hat N_{h}]\|_2-\frac{(1+\eta)}{\sqrt{nh}}\|K\|_2\big\}_+^{2q}\big]\leq \square_{q, \eta, \delta \|K\|_2, \|K\|_1, \|N\|_\infty} n^{-q}.\]
\item Assume that $\|K'\|_2, \|K'\|_1, \|g\|_\infty$ and $\|N\|_\infty$ are finite. For every $q>0$,
introduce the grid $\mathcal{\tilde H}\subset\{1,1/2,..., 1/\tilde D_{\max}\}$ and $\tilde D_{\max}=  \sqrt{\tilde\delta n}$ for some $\tilde\delta>0$.  Then for every $\eta>0$
\[\E\big[\sup_{h\in\tilde\HH}\big\{\|\hat D_{h}-\E[\hat D_{h}]\|_2-\frac{(1+\eta)}{\sqrt{nh^{3}}}\|g\|_\infty \|K'\|_2\big\}_+^{2q}\big]\leq \square_{q, \eta , \tilde\delta, \|K'\|_2, \|K'\|_1, \|g\|_\infty,\|N\|_\infty} n^{-q}.\]
\end{itemize}
\end{lemma}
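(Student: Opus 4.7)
The plan is to reduce each $\mathbb{L}^2$-norm of the centered kernel estimator to a supremum of an empirical process over the unit ball of $\mathbb{L}^2$, apply a Talagrand inequality (Bousquet form) at each fixed bandwidth, union-bound over the arithmetic grid, and finally integrate the tail to recover the $2q$-th moment. All three steps are standard in model selection theory (see \cite{mas}); what makes them produce precisely the announced rate here is that the strong variance parameter of the process turns out to be $h$-independent, so that the union bound costs only a summable factor in $D := 1/h$.

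For the first assertion I would start from the dualization
$$\|\hat N_h - \E\hat N_h\|_2 = \sup_{\|f\|_2 \leq 1} \frac{1}{n}\sum_{i=1}^n\bigl\{\varphi_{f,h}(X_i) - \E[\varphi_{f,h}(X_1)]\bigr\},\qquad \varphi_{f,h}(y) := \int f(x) K_h(x-y)\,dx,$$
together with the in-expectation bound $\E\|\hat N_h - \E\hat N_h\|_2 \leq \|K\|_2/\sqrt{nh}$ already derived in Section~\ref{sec:N:Lepski}. Cauchy--Schwarz yields the $\mathbb{L}^\infty$ envelope $M_h \leq \|K\|_2/\sqrt{h}$ of the class, while Young's convolution inequality yields the \emph{$h$-independent} variance parameter
$$\sigma^2 := \sup_{\|f\|_2\leq 1}\Var(\varphi_{f,h}(X_1)) \leq \|N\|_\infty \|\varphi_{f,h}\|_2^2 \leq \|N\|_\infty \|K\|_1^2.$$
Bousquet's inequality then delivers, for every $x>0$, a tail bound of the form $\PP\bigl(\|\hat N_h - \E\hat N_h\|_2 \geq (1+\eta)\|K\|_2/\sqrt{nh} + c_\eta\bigl(\sigma\sqrt{x/n} + M_h x/n\bigr)\bigr) \leq e^{-x}$, the Talagrand prefactor being absorbed into $1+\eta$.

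The next step is to turn this into a tail bound for the centered excess $Z_h := \bigl\{\|\hat N_h - \E\hat N_h\|_2 - (1+\eta)\|K\|_2/\sqrt{nh}\bigr\}_+$ at level $t$, and then to union-bound over $h = 1/D \in \HH$. Inverting the deviation term in the two regimes $\sigma\sqrt{x/n}$ vs $M_h x/n$ yields a tail of order $\exp(-c_1 D)\cdot\exp(-c_2 n t^2) + \exp(-c_3 \sqrt{nD})\cdot\exp(-c_4 \sqrt{n}\,t)$; the $D$-dependent factors are summable over $D \leq D_{\max} = \delta n$ (trivially for the variance-dominated piece, and precisely thanks to the restriction $D_{\max} \leq \delta n$ for the sup-norm-dominated one). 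Integration $\E Z^{2q} = 2q\int_0^\infty t^{2q-1}\PP(Z \geq t)\,dt$ over the resulting mixed Gaussian--exponential tail then produces the announced order $n^{-q}$.

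The second assertion follows the same blueprint with $(K,\hat N_h)$ replaced by $(K',\hat D_h)$: the factor $g(X_i)$ is absorbed by $\|g\|_\infty$, the expectation bound becomes $\|g\|_\infty\|K'\|_2/\sqrt{nh^3}$, the variance stays $h$-independent ($\sigma^2 \leq \|g\|_\infty^2\|N\|_\infty\|K'\|_1^2$), but the sup-norm envelope blows up as $M_h \leq \|g\|_\infty\|K'\|_2\,h^{-3/2}$, so the constraint for absorbing $M_h x/n$ now reads $D^3 \lesssim n$---exactly why $\tilde D_{\max} = \sqrt{\tilde\delta n}$ is imposed. I expect the main obstacle to lie in this bookkeeping: one must split the Bousquet deviation into a variance-dominated regime (which supplies the $\exp(-cD)$ that makes the union bound over $\HH$ summable) and a sup-norm-dominated regime (which only survives under the sharp cap on $D_{\max}$), and then verify that both residuals integrate to the target order $n^{-q}$ without extraneous logarithmic losses.
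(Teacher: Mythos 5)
Your proposal follows essentially the same route as the paper's proof: dualize the $\mathbb{L}^2$-norm over (a countable dense subset of) the unit ball, apply Bousquet's version of Talagrand's inequality with exactly the three bounds you identify (expectation $\|K\|_2/\sqrt{nh}$, the $h$-independent variance $\|N\|_\infty\|K\|_1^2$ via Young's inequality, and the envelope $\|K\|_2 h^{-1/2}$), then run a weighted union bound over the grid --- the paper takes weights $L_h\propto\sqrt{D}$ (resp.\ $L'_h\propto D$) so that the extra deviation is absorbed into the $(1+\eta)$ main term under the caps $D_{\max}=\delta n$ and $\tilde D_{\max}=\sqrt{\tilde\delta n}$ --- and integrate the tail. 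One small correction to your bookkeeping: with envelope $h^{-3/2}$ and a weight proportional to $D$, the absorption condition in the derivative case is $D\lesssim\sqrt{n}$ (i.e.\ $D^2\lesssim n$), not $D^3\lesssim n$, which is precisely what $\tilde D_{\max}=\sqrt{\tilde\delta n}$ delivers.
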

\begin{proof}
Let $X$ be a real random variable and let us consider the random process 
\[\forall\, t \in \mathbb{R},~~w(t,X)=\varphi(X)\Psi(t-X),\]
where $\varphi$ and $\Psi$ are measurable real-valued functions.
Let $X_1,...,X_n$ be $n$ independent and identically distributed random variables with the same distribution as $X$ and let us consider the process
\[\forall\, t \in \R,~~\xi_{\varphi,\Psi}(t)=\sum_{i=1}^n \big(w(t,X_i)-\E\big[w(t,X)\big]\big).\]
First, let us study the behavior of 
\[\norme{\xi_{\varphi,\Psi}}_2= \Big(\int \xi_{\varphi,\Psi}^2(t) dt\Big)^{1/2}.\]
If $\mathcal{B}$ denotes the unit ball in $\mathbb{L}^2$ and $\mathcal{A}$ is a countable subset of $\mathcal{B}$, we have
\begin{eqnarray*}
\norme{\xi_{\varphi,\Psi}}_2&=&\sup_{a\in\mathcal{B}} \int a(t)  \xi_{\varphi,\Psi}(t) dt \\
&=& \sup_{a\in \mathcal{A}} \int a(t)  \xi_{\varphi,\Psi}(t) dt\\
&=& \sup_{a\in \mathcal{A}} \sum_{i=1}^n \int a(t)  \big(w(t,X_i)-\E\big[w(t,X)\big]\big) dt.
\end{eqnarray*}
Hence one can apply Talagrand's inequality (see the version of Bousquet  in the independent and identically distributed case \cite[p 170]{mas}). For all  $\ve,x >0$, one has:
\[\PP\big( \norme{\xi_{\varphi,\Psi}}_2 \geq (1+\ve) \E[\norme{\xi_{\varphi,\Psi}}_2] +\sqrt{2vx} + c(\ve) bx\big)\leq e^{-x},\]
where $c(\ve)=1/3+\ve^{-1},$
 \[v=n\sup_{a\in \mathcal{A}}  \E\big[ \big(\int a(t)  \big[w(t,X)-\E\big(w(t,X)\big)\big] dt\big)^2\big],\]
and
\[b=\sup_{y\in \R, a\in\mathcal{A}} \int a(t)  \big[w(t,y)-\E\big(w(t,X)\big)\big] dt.\]
We study each term of the right hand term within the expectation.
\begin{itemize}
\item Obviously, one has:
\begin{align*}
\E\big[\|\xi_{\varphi,\Psi}\|_2\big] \leq\; & \Big(\E\big[ \int \Big(\sum_{i=1}^n \big(w(t,X_i)-\E\big[w(t,X)\big]\big)\Big)^2 dt \Big)^{1/2}\\
=\; &\Big( \int \sum_{i=1}^n \E\big[ \big(w(t,X_i)-\E\big[w(t,X)\big]\big)^2 \big]dt\Big)^{1/2} \\
\leq\; &  \Big( n \int \E \big[w(t,X)^2\big] dt \Big)^{1/2}.
\end{align*}
But we easily see that for all $y$, 
\begin{equation}
\label{utile}
\int w^2(t,y)dt \leq \|\varphi\|_\infty^2 \|\Psi\|_2^2,
\end{equation}
hence
\[\E[\norme{\xi_{\varphi,\Psi}}_2] \leq \sqrt{n} \norme{\varphi}_\infty \norme{\Psi}_2.\]
\item 
Since $v$ is a supremum of variance terms, 
\begin{eqnarray*}
v&\leq & n  \sup_{a\in \mathcal{A}}   \E\big[ \big(\int a(t) w(t,X) dt \big)^2 \big] \\
&\leq & n \sup_{a\in \mathcal{A}} \E\big[ \int |w(t,X)| dt \int a^2(t) |w(t,X)| dt\big]
\\
&\leq& n \sup_{y\in \R} \int |w(t,y)| dt \times \sup_{t\in\R} \E[|w(t,X)|]\\
&\leq & n \|\varphi\|_\infty^2 \|N\|_\infty \|\Psi\|_1^2.
\end{eqnarray*}
\item The Cauchy-Schwarz inequality and \eqref{utile} give
\begin{eqnarray*}
b&= & \sup_{y\in\R} \|w(\cdot,y)-\E[w(.,X)]\|_2 \\
&\leq & \sup_{y\in\R}  \|w(\cdot,y)\|_2 + \big(\E\big[\int w(t,X)^2 dt\big]\big)^{1/2} \\
&\leq & 2 \|\varphi\|_\infty \|\Psi\|_2.
\end{eqnarray*}
\end{itemize}
The main point here is that $\sqrt{v}$ may be much smaller than $\E[\|\xi_{\varphi,\Psi}\|_2]$. So, for all  $\ve,x >0$,
\[\PP\big( \|\xi_{\varphi,\Psi}\|_2 \geq (1+\ve)\sqrt{n} \|\varphi\|_\infty \|\Psi\|_2  + \|\varphi\|_\infty \|N\|_\infty^{1/2} \|\Psi\|_1\sqrt{2nx} + 2 c(\ve)  \|\varphi\|_\infty \|\Psi\|_2 x\big)\leq e^{-x}.\]
Now we consider a family $\mathcal{M}$ of possible functions $\varphi$ and $\Psi$.
Let us introduce some strictly positive weights $L_{\varphi,\Psi}$  and let us apply the previous inequality to $x=L_{\varphi,\Psi}+u$ for $u>0$. Hence with probability larger than $1-\sum_{(\varphi, \Psi)\in\mathcal{M}} e^{-L_{\varphi,\Psi}} e^{-u}$, for all $(\varphi,\Psi)\in\mathcal{M}$, one has
\begin{eqnarray}
\label{depart}
\|\xi_{\varphi,\Psi}\|_2 &\leq& (1+\ve)\sqrt{n} \|\varphi \|_\infty \|\Psi \|_2  + \|\varphi \|_\infty \|N\|_\infty^{1/2} \|\Psi \|_1\sqrt{2nL_{\varphi,\Psi}} + 2 c(\ve)  \|\varphi \|_\infty \|\Psi \|_2 L_{\varphi,\Psi} \nonumber\\
&& + \|\varphi\|_\infty \|N\|_\infty^{1/2} \|\Psi\|_1\sqrt{2nu} +  2 c(\ve)  \|\varphi\|_\infty \|\Psi\|_2 u.
\end{eqnarray}
Let 
\[M_{\varphi,\Psi}=(1+\ve)\sqrt{n} \|\varphi\|_\infty \|\Psi\|_2  + \|\varphi\|_\infty \|N\|_\infty^{1/2} \|\Psi\|_1\sqrt{2nL_{\varphi,\Psi}} + 2 c(\ve)  \|\varphi\|_\infty \|\Psi\|_2 L_{\varphi,\Psi}.\]
It is also easy to obtain an upper bound of $R_q$ for any $q \geq 1$ with 
\[R_q=\E\big[\sup_{(\varphi, \Psi)\in \mathcal{M}} \big(\|\xi_{\varphi,\Psi}\|_2-M_{\varphi,\Psi}\big)_+^{2q}\big]=\int_0^{+\infty}\PP\big(\sup_{(\varphi, \Psi)\in \mathcal{M}} \big(\|\xi_{\varphi,\Psi}\|_2-M_{\varphi,\Psi}\big)_+^{2q}\geq x\big) dx.\]
Indeed
\[
R_q\leq \sum_{(\varphi, \Psi)\in \mathcal{M}} \int_0^{+\infty}\PP\big(\big(\norme{\xi_{\varphi,\Psi}}_2-M_{\varphi,\Psi}\big)_+^{2q}\geq x\big) dx.\]
Then, let us take $u$ such that \[x=f(u)^{2q}:= \big(\|\varphi\|_\infty \|N\|_\infty^{1/2} \|\Psi\|_1\sqrt{2nu} +  2 c(\ve)  \|\varphi\|_\infty \|\Psi\|_2 u\big)^{2q},\]
so 
\[dx = 2q\big(f(u)\big)^{2q-1} \big(\sqrt{2n}\|\varphi\|_\infty \|N\|_\infty^{1/2} \|\Psi\|_1\frac{1}{2\sqrt{u}}+2 c(\ve)  \|\varphi\|_\infty \|\Psi\|_2\big) du.\]
Hence
\begin{eqnarray*}
R_q&\leq&  \sum_{(\varphi, \Psi)\in \mathcal{M}} \int_0^{+\infty} e^{-(L_{\varphi,\Psi}+u)} 2q\big(f(u)\big)^{2q-1} \big(\sqrt{2n}\|\varphi\|_\infty \|N\|_\infty^{1/2} \|\Psi\|_1\frac{1}{2\sqrt{u}}+2 c(\ve)  \|\varphi\|_\infty \|\Psi\|_2\big) du \\
&\leq& 2 q \sum_{(\varphi, \Psi)\in \mathcal{M}} e^{- L_{\varphi,\Psi}}\int_0^{+\infty} f(u)^{2q} e^{-u} u^{-1}du,\\
&\leq& \square_{q,\ve} \sum_{(\varphi, \Psi)\in \mathcal{M}} e^{- L_{\varphi,\Psi}} \big[n^q \|N\|_\infty^q \|\varphi\|_\infty^{2q}  \|\Psi\|_1^{2q} \int_0^{+\infty} e^{-u} u^{q-1} du +  \|\varphi\|_\infty^{2q}  \|\Psi\|_2^{2q} \int_0^{+\infty} e^{-u} u^{2q-1} du\big].\\
\end{eqnarray*}
Finally, we have proved that
\begin{equation}
\label{final}
R_q
\leq  \square_{q,\ve}
 \sum_{(\varphi, \Psi)\in \mathcal{M}} e^{- L_{\varphi,\Psi}} \big[n^q \|N\|_\infty^q \|\varphi\|_\infty^{2q}  \|\Psi\|_1^{2q} +  \|\varphi \|_\infty^{2q}  \|\Psi \|_2^{2q} \big].
\end{equation}
Now let us evaluate what this inequality means for each set-up.
\begin{itemize}
\item First, when $\varphi=1/n$ and $\Psi=1/h K(\cdot/h)$, the family $\mathcal{M}$ corresponds to the family  $\mathcal{H}$. In that case $M_{\varphi,\Psi}$ and $L_{\varphi,\Psi}$  will respectively be denoted by $M_h$ and $L_h$. The upper bound given in \eqref{final} becomes
\begin{equation}
\label{finalnoyau}
R_q
\leq  \square_{q,\ve} \sum_{h\in \mathcal{H}} e^{- L_h} \big[n^{-q} \|N\|_\infty^q  \|K\|_1^{2q} +  n^{-2q} h^{-q} \|K\|_2^{2q}\big].
\end{equation}
Now it remains to choose $L_h$. But
\[M_h=(1+\ve)\frac{1}{\sqrt{nh}} \|K\|_2  + \|N\|_\infty^{1/2} \|K\|_1\sqrt{\frac{2L_h}{n}} + 2 c(\ve)  \frac{1}{n \sqrt{h}} \|K\|_2 L_h.\]
Let
$\theta>0$ and let  \[L_h=\frac{\theta^2 \|K\|_2^2}{2\|N\|_\infty \|K\|_1^2 \sqrt{h}}.\]
Obviously the series in \eqref{finalnoyau} is finite and for any $h\in\mathcal{H}$,
since $h\leq 1$, we have:
\[M_h\leq (1+\ve+\theta)\frac{\|K\|_2}{\sqrt{nh}}+\frac{c(\ve) \theta^2 \|K\|_2^3}{\|N\|_\infty \|K\|_1^2}\frac{1}{nh}.\]
Since $D_{\max}=\delta n$, one obtains that
\[M_h\leq \big(1+\ve+\theta+\frac{c(\ve) \theta^2 \|K\|_2^3\sqrt{\delta}}{\|N\|_\infty \|K\|_1^2}\big)\frac{\|K\|_2}{\sqrt{nh}}.\]
It remains to choose $\ve=\eta/2$ and $\theta$ small enough such that $\theta+\frac{c(\ve) \theta^2 \|K\|_2^3\sqrt{\delta}}{\|N\|_\infty \|K\|_1^2}=\eta/2$ to obtain the desired inequality.
\item Secondly, if $\varphi=g/n$ and $\Psi=1/h^2 K'(\cdot/h)$the family $\mathcal{M}$ corresponds to the family $\tilde{\mathcal{H}}$. So,  $M_{\varphi,\Psi}$ and $L_{\varphi,\Psi}$  will  be denoted by  $M'_h$ and $L'_h$ respectively. The upper bound given by \eqref{final} now becomes
\begin{equation}
\label{finalderive}
R_q
\leq \square_{q,\ve} \sum_{h\in \tilde{\mathcal{H}}} e^{- L'_h} \big[n^{-q} h^{-2q} \|N\|_\infty^q\|g\|_\infty^{2q} \|K'\|_1^{2q}  +  n^{-2q} h^{-3q} \|g\|_\infty^{2q} \|K'\|_2^{2q}\big].
\end{equation}
But
\[M'_h=(1+\ve)\frac{1}{\sqrt{n}h^{3/2}} \|g\|_\infty \|K'\|_2  + \|g\|_\infty \|N\|_\infty^{1/2} \|K'\|_1\sqrt{\frac{2L'_h}{nh^2}} + 2 c(\ve)  \|g\|_\infty \|K'\|_2 \frac{L'_h}{n h^{3/2}}.\]
Let
$\theta>0$ and let  \[L'_h=\frac{\theta^2 \|K'\|_2^2}{2\|N\|_\infty\|K'\|_1^2 h}.\]
Obviously the series in \eqref{finalderive} is finite and
we have:
\[M'_h\leq (1+\ve+\theta)\|g\|_\infty\frac{\|K'\|_2}{\sqrt{nh^3}}+\frac{c(\ve) \theta^2 \|K'\|_2^3\|g\|_\infty}{\|N\|_\infty \|K'\|_1^2}\frac{1}{nh^{5/2}}.\]
But $h^2\geq (\tilde\delta n)^{-1}$. Hence
\[M'_h\leq \big(1+\ve+\theta+ \frac{c(\ve) \theta^2 \|K'\|_2^3\|g\|_\infty\sqrt{\tilde\delta}}{\|N\|_\infty \|K'\|_1^2}\big)\frac{\|K'\|_2}{\sqrt{nh^3}}.\]
As previously, it remains to choose $\ve$ and $\theta$ accordingly to conclude.
\end{itemize}
\end{proof}
The second result is based on probabilistic arguments as well.
\begin{lemma}\label{bar}
Under Assumptions and notations of Proposition \ref{rate}, 
if there exists an interval $[a,b]$ in $(0,T)$ such that 
$$[m, M]  :=[\inf_{x\in[a,b]} N(x), \sup_{x\in [a,b]} N(x)] \subset (0,\infty), \qquad Q  :=\sup_{x\in [a,b]} |H(x)|<\infty,$$
and if $\ln(n)\leq D_{\min}\leq n^{1/(2m_0+1)}$ and $n^{1/5}\leq D_{\max} \leq (n/\ln(n))^{1/(4+\eta)}$ for some $\eta>0$ fixed, then  there exists $C_\eta$, a constant depending on $\eta$, such that for $n$ large enough,
$$\PP(\Omega_n^c) \leq C_\eta n^{-q}.$$
\end{lemma}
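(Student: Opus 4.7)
The plan is to argue that on $[a,b]$, the kernel estimator $\hat N$ is uniformly close to $N\geq m$, so that $\hat N\geq m/2$ there with overwhelming probability. Since $N(x)\geq m$ for $x\in[a,b]$, the inclusion $\Omega_n^c \subset \{\sup_{x\in[a,b]} |\hat N(x)-N(x)|>m/2\}$ reduces the problem to bounding a uniform deviation. I will split $|\hat N_{\hat h}(x)-N(x)|$ into a bias term $|K_{\hat h}\star N(x)-N(x)|$ and a stochastic term $|\hat N_{\hat h}(x)-K_{\hat h}\star N(x)|$, and control each by less than $m/4$.

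For the bias, Assumption \ref{as:an} gives $N\in\mathcal{W}^{s+1}$ with $s\geq 1$, so by Sobolev embedding $N$ is uniformly continuous on a neighborhood of $[a,b]$, and the standard kernel approximation estimate gives $\sup_{x\in[a,b]}\norme{K_h\star N-N}_\infty\to 0$ as $h\to 0$. Since $\hat h\leq 1/D_{\min}\leq 1/\ln n$, the bias is uniformly smaller than $m/4$ for $n$ large enough, on every realization of $\hat h$.

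For the stochastic part, I first union-bound over the finite family $\mathcal{H}$ (of cardinality $\leq D_{\max}$) to remove the data-dependence of $\hat h$. For each fixed $h\in\mathcal{H}$, the Lipschitz-regularity of $K$ assumed in Proposition \ref{rate} implies that $K_h$, hence $\hat N_h$ and $K_h\star N$, are $L/h^2$-Lipschitz, so that with $h\geq 1/D_{\max}$ both have Lipschitz constant at most $LD_{\max}^2$. I then discretize $[a,b]$ on a regular grid $\mathcal{G}_n$ of spacing $\delta\asymp D_{\max}^{-2}$ and cardinality $O(D_{\max}^2)$, which reduces $\sup_{x\in[a,b]}$ to $\sup_{x_0\in\mathcal{G}_n}$ up to an error $\leq m/8$. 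At each grid point, Bernstein's inequality applied to the centered summands $K_h(x_0-X_i)-\E[K_h(x_0-X_1)]$, bounded by $2\norme{K}_\infty/h$ with variance at most $\norme{K}_2^2\norme{N}_\infty/h$, yields
$$\PP\big(|\hat N_h(x_0)-K_h\star N(x_0)|>m/8\big)\leq 2\exp(-c_1 nh)\leq 2\exp(-c_1 n/D_{\max}),$$
for a constant $c_1$ depending on $m$, $\norme{K}_\infty$, $\norme{K}_2$ and $\norme{N}_\infty$.

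Combining these three union bounds (over $\mathcal H$, over $\mathcal G_n$, and the two-sided tail) yields $\PP(\Omega_n^c)\leq C D_{\max}^3\exp(-c_1 n/D_{\max})$. The hypothesis $D_{\max}\leq(n/\ln n)^{1/(4+\eta)}$ is precisely what ensures $n/D_{\max}\geq n^{(3+\eta)/(4+\eta)}(\ln n)^{1/(4+\eta)}\gg\ln n$, so the exponential dominates the polynomial prefactor $D_{\max}^3\leq n^{3/(4+\eta)}$ and the right-hand side is $o(n^{-q})$ for any fixed $q$. The main obstacle is this balancing: the union bounds force a polynomial prefactor in $D_{\max}$, while Bernstein only offers an exponential gain $\exp(-nh)$ at each point, so one must check that the bandwidth upper bound is tight enough for the exponential to win---this is exactly what the hypothesis $D_{\max}\leq (n/\ln n)^{1/(4+\eta)}$ guarantees, and explains why this specific upper bound is imposed in the statement.
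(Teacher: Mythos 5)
Your proof is correct, and it reaches the conclusion by a genuinely different route for the uniform control of the stochastic term. The paper also reduces $\Omega_n^c$ to a uniform deviation of $\hat N_h-K_h\star N$ over $h\in\mathcal H$ and controls the bias through the boundedness of $N'$, but for the supremum in $x$ it does not discretize: it applies a Bernstein-type chaining inequality for suprema of empirical processes (Theorem 2.1 of Baraud's paper, cited as \cite{bar}) to the increment process $Z_h(x,x_0)=\hat N_h(x)-\hat N_h(y)-[K_h\star N(x)-K_h\star N(y)]$, whose Bernstein parameters are bounded via the Lipschitz constant of $K$ by $v^2\lesssim \mathcal K^2|b-a|^2/(nh^4)$ and $c\lesssim\mathcal K|b-a|/(nh^2)$, and then sums over $h$ with weights $e^{-D^\eta}$. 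Your $\varepsilon$-net argument (grid of mesh $\asymp D_{\max}^{-2}$ matched to the Lipschitz constant $\mathcal K/h^2$ of $\hat N_h-K_h\star N$, pointwise Bernstein, triple union bound) is more elementary and self-contained, at the price of a polynomial prefactor $D_{\max}^3$ that the exponential $e^{-c_1n/D_{\max}}$ easily absorbs. One remark on your closing comment: your own argument only requires $n/D_{\max}\gg\ln n$, i.e.\ $D_{\max}=o(n/\ln n)$, so the specific exponent in $D_{\max}\leq(n/\ln n)^{1/(4+\eta)}$ is far from being ``precisely'' what your route needs; that exponent is tied to the paper's sharper chaining bound, where the deviation is of order $\sqrt{\log n/(nh^{4+\eta})}$ (the $h^{-4}$ coming from the squared Lipschitz constant) and one needs $\log n/(nh^{4+\eta})$ to stay small uniformly over $\mathcal H$. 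So the hypothesis is used by you with a large margin, which is harmless but worth stating accurately.
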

\begin{proof}
Let $x,y$ be two fixed point of $[a,b]$ and for $h\in\HH'$, first let us look at
$$Z_h(x,y)=\hat{N}_h(x)-\hat{N}_h(y)- [K_h\star N(x)-K_h\star N(y)].$$
One can apply Bernstein inequality to $Z_h(x,y)$ (see {\it e.g.} (2.10) and (2.21) of \cite{mas}) to obtain:
$$\E\big[e^{\lambda Z_h(x,y)}\big] \leq \exp\big[\frac{\lambda^2 v^2(x,y)}{2(1-\lambda c(x,y))}\big], \quad \forall \lambda \in (0, 1/c(x,y)),$$
with
$$c(x,y)=\frac{1}{3n}\sup_{u\in \R}|K_h(x-u)-K_h(y-u)|$$
and
$$v^2(x,y)= \frac{1}{n} \int_{\R} |K_h(x-u)-K_h(y-u)|^2 N(u) du.$$
But, with  $\mathcal{K}$ the Lipschitz constant of $K$,
$$c(x,y)\leq \frac{\mathcal{K}}{3nh^2} |x-y|$$
and 
$$v^2(x,y) \leq \frac{\mathcal{K}^2}{nh^4} |x-y|^2.$$
Let $x_0$ be a fixed point of $[a,b]$. One can apply Theorem 2.1 of \cite{bar} which gives that, if
$$ Z_h=\sup_{x\in[a,b]} |Z_h(x,x_0)|$$
then for all positive $x$,
$$\PP(Z_h\geq 18 (\sqrt{v^2(x+1)}+c(x+1))\leq 2 e^{-x},$$
with 
$$v^2= \frac{\mathcal{K}^2}{nh^4} |b-a|^2$$
and
$$c=\frac{\mathcal{K}}{3nh^2} |b-a|.$$
But one can similarly prove, using Bernstein's inequality, that
$$\PP\big(|\hat{N}_h(x_0)- K_h\star N(x_0)|\geq\sqrt{\frac{\square_{N,K}x}{nh}}+\square_{K}\frac{x}{nh}\big)\leq 2 e^{-x}.$$
Hence 
$$\PP\Big(\exists x\in [a,b], |\hat{N}_h(x)- K_h\star N(x)|\geq\sqrt{\frac{\square_{N,K}x}{nh^4}}+\square_{K}\frac{x}{nh^2}\Big)\leq 4 e^{-x}.$$
We apply this inequality with $x=q \log (n)+D^\eta$, $D=D'_{\min}, \ldots ,D'_{\max}$. We obtain 
{\small
\begin{align*}
& \PP\Big(\exists h \in \HH' ,\exists x\in [a,b], |\hat{N}_h(x)- K_h\star N(x)|\geq\sqrt{\frac{\square_{N,K,q}\log (n)}{nh^{4+\eta}}}+\square_{K,q}\frac{\log (n)}{nh^{2+\eta}}\Big) \\
\leq\; & 4 \sum_{D=1}^{D_{\max}}e^{-D^\eta} n^{-q} =  \square_\eta n^{-q} .
\end{align*}
}
Consequently, since $\frac{\log (n)}{nh^{4+\eta}}\to 0$ uniformly in $h\in\HH'$,  for $n$ large enough,
$$\PP(\exists h \in \HH', \exists x\in [a,b], |\hat{N}_h(x)- K_h\star N(x)|\geq  m/4)\leq \square_\eta n^{-q} .$$
But since $N\in  {\mathcal W}^{s+1}$, $s\geq 1$, then $N'\in  {\mathcal W}^{s}$ so $N'$ is bounded on $\R$. Then
 \begin{eqnarray*}
 K_h\star N(x)-N(x)&=&\int K_h(x-y)(N(y)-N(x))dy\\
 &=&\int K\big(\frac{x-y}{h}\big)\big(\frac{y-x}{h}\big)\int_0^1N'(x+u(y-x))dudy\\
 &=&h\int t K\big(t\big)\int_0^1N'(x-uht)dudt.
 \end{eqnarray*}
 So,
$$ |N(x)-K_h\star N(x)| \leq \square_{N,K} h.$$
Because of the definition of $\HH'$ this term tends to $0$, uniformly in $x$ and $h$. Hence for $n$ large enough, for all $x\in[a,b]$ and all $h\in\HH'$
$$K_h\star N(x)\geq N(x) - m/4 \geq 3m/4.$$
Consequently,  for $n$ large enough,
$$\PP(\exists h \in \HH', \exists x\in [a,b], 2\hat{N}_h(x)<  m)\leq \square_\eta n^{-q} .$$
\end{proof}
\subsubsection*{Rosenthal-type inequality}
The following result studies the behavior of the moments of $\hat\rho_n$.
\begin{lemma}\label{hatrho}
For any $p\geq 2$, 
\[\E\big[|\hat\rho_n-\rho_g(N)|^{p}\big]\leq \square_{p,g,N,c}n^{-p/2}.\]
\end{lemma}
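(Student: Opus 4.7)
The plan is to write $\hat\rho_n-\rho_g(N)$ as a ratio with an algebraically transparent numerator, and then decompose the expectation according to whether the random denominator stays comparable to its mean or possibly degenerates.

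\medskip
\noindent\textbf{Setup and identity.} Set $\mu_1:=\int xN(x)\,dx$ and $\mu_g:=\int g(x)N(x)\,dx$, so that $\rho_g(N)=\mu_1/\mu_g$. By Assumption~\ref{as:an:2}, both are finite, $\mu_g>0$, and $X_1$ has finite moments of every order; by Assumption~\ref{as:an:4}, $g(X_1)$ is bounded, hence has all moments. Writing $\bar X_n:=n^{-1}\sum_i X_i$ and $\bar G_n:=n^{-1}\sum_i g(X_i)$, we have $\hat\rho_n=\bar X_n/(\bar G_n+c/n)$, and a direct computation gives
\[
\hat\rho_n-\rho_g(N)=\frac{\mu_g(\bar X_n-\mu_1)-\mu_1(\bar G_n-\mu_g)-\mu_1 c/n}{\mu_g(\bar G_n+c/n)}.
\]

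\medskip
\noindent\textbf{Good event.} Let $A_n:=\{\bar G_n\geq \mu_g/2\}$. On $A_n$ the denominator exceeds $\mu_g^2/2$, so
\[
|\hat\rho_n-\rho_g(N)|\,\1_{A_n}\leq \frac{2}{\mu_g}|\bar X_n-\mu_1|+\frac{2\mu_1}{\mu_g^2}|\bar G_n-\mu_g|+\frac{2\mu_1 c}{\mu_g^2\, n}.
\]
Applying Rosenthal's inequality for $p\geq 2$ to the i.i.d. centered variables $X_i-\mu_1$ and $g(X_i)-\mu_g$ (both with finite $p$-th moment) yields $\E[|\bar X_n-\mu_1|^p]+\E[|\bar G_n-\mu_g|^p]\leq \square_{p,g,N}\,n^{-p/2}$, whence $\E\bigl[|\hat\rho_n-\rho_g(N)|^p\,\1_{A_n}\bigr]\leq \square_{p,g,N,c}\,n^{-p/2}$.

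\medskip
\noindent\textbf{Bad event.} On $A_n^c$ use the deterministic bound $\hat\rho_n\leq n\bar X_n/c$ together with Cauchy--Schwarz:
\[
\E\bigl[|\hat\rho_n-\rho_g(N)|^p\,\1_{A_n^c}\bigr]\leq \square_{p,c}\,n^p\sqrt{\E[\bar X_n^{2p}]}\sqrt{\PP(A_n^c)}+\rho_g(N)^p\,\PP(A_n^c).
\]
By Jensen, $\E[\bar X_n^{2p}]\leq \E[X_1^{2p}]<\infty$. To show $\PP(A_n^c)$ decays arbitrarily fast, apply Markov's inequality at order $6p$ together with Rosenthal on the bounded variables $g(X_i)-\mu_g$:
\[
\PP(A_n^c)\leq \PP\bigl(|\bar G_n-\mu_g|\geq \mu_g/2\bigr)\leq \frac{\E\bigl[|\bar G_n-\mu_g|^{6p}\bigr]}{(\mu_g/2)^{6p}}\leq \square_{p,g}\,n^{-3p}.
\]
Hence the bad-event contribution is $\square_{p,g,N,c}\,n^{-p/2}$, and combining both regimes yields the lemma.

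\medskip
\noindent\textbf{Main obstacle.} The only subtlety is that the random denominator may be as small as $c/n$, so that a naive perturbation argument around $\rho_g(N)$ only produces a bound of order $n^p$. The splitting argument converts this into the target $n^{-p/2}$ rate by exploiting that the boundedness of $g$ from Assumption~\ref{as:an:4} provides polynomial concentration of $\bar G_n$ at any order, which more than compensates for the crude deterministic estimate $\hat\rho_n\leq n\bar X_n/c$ used on the bad event.
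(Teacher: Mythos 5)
Your proof is correct and follows essentially the same strategy as the paper's: isolate the fluctuations of the numerator and of the denominator, control them by Rosenthal's inequality, and handle the event where the random denominator degenerates by combining a crude polynomial bound (of order $n^p$, coming from the lower bound $c$ on the denominator) with a fast-decaying tail probability for $\bar G_n$. The only cosmetic differences are that the paper telescopes through the intermediate quantity $\sum_i X_i/(n\int gN+c)$ rather than conditioning directly on $\{\bar G_n\geq \mu_g/2\}$, and it obtains the tail bound via Bernstein's inequality for the bounded variables $g(X_i)$ rather than via Markov's inequality at order $6p$ combined with Rosenthal — both routes give a tail small enough to absorb the $n^p$ factor.
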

\begin{proof}
 We have:
\[\E\big[|\hat\rho_n-\rho_g(N)|^{p}\big]\leq \square_{p}(A_n+B_n),
\]
with
\[A_n:=\E\Big[\Big|\frac{\sum_{i=1}^nX_i}{n\int g(x)N(x)dx+c}-\frac{\int_{\R_+}xN(x)dx}{\int_{\R_+}g(x)N(x)dx}\Big|^p\Big]\]
and
\[B_n:=\E\Big[\Big|\frac{\sum_{i=1}^nX_i}{\sum_{i=1}^ng(X_i)+c}-\frac{\sum_{i=1}^nX_i}{n\int g(x)N(x)dx+c}\Big|^p\Big].\]
We use the Rosenthal inequality (see {\it e.g.} the textbook \cite{HH}): if $R_1,\ldots,R_n$ are independent centered variables such that
$\E[|R_1|^{p}]<\infty,$  with $p\geq 2$ then 
\begin{eqnarray}\label{rosenthal}
 \E\big[\big|\frac{1}{n}\sum_{i=1}^n R_i\big|^{p}\big] \leq \square_p n^{-p}\big(n\E|R_1|^{p}+\big(n\E R_1^{2}\big)^{p/2}\big).
  \end{eqnarray}
We recall that Assumption \ref{as:an} ensures that  $\E[X_i^p]=\int x^p N(x) dx<\infty$, for any $p\geq 1$.
Hence, 
for the first term $A_n$, using  (\ref{rosenthal}), we have:
\begin{eqnarray*}
 A_n&=&\E\Big[\Big|\frac{\frac{1}{n}\sum_{i=1}^nX_i}{\int g(x)N(x)dx+\frac{c}{n}}-\frac{\int_{\R_+}xN(x)dx}{\int_{\R_+}g(x)N(x)dx}\Big|^p\Big]\\
 &\leq&\square_{g,N,c}\E\Big[\Big|\frac{1}{n}\sum_{i=1}^nX_i\int_{\R_+}g(x)N(x)dx-\int_{\R_+}xN(x)dx\big(\int g(x)N(x)dx+\frac{c}{n}\big)\Big|^p\Big]\\
 &\leq&\square_{p,g,N}\E\Big[\Big|\frac{1}{n}\sum_{i=1}^nX_i-\int_{\R_+}xN(x)dx\Big|^p\Big]+\square_{g,N,c}n^{-p}\\
 &\leq&\square_{p,g,N,c}n^{-p/2}.
 \end{eqnarray*}
Let us turn to the term $B_n$: 
\begin{eqnarray*}
 B_n&=&\E\big[\big|\frac{\sum_{i=1}^nX_i}{\sum_{i=1}^ng(X_i)+c}-\frac{\sum_{i=1}^nX_i}{n\int g(x)N(x)dx+c}\big|^p\big]\\
 &\leq&\big(\E\big[\big|\frac{1}{n}\sum_{i=1}^nX_i\big|^{2p}\big]\big)^{1/2}\times\big(\E\big[\big|\frac{\frac{1}{n}\sum_{i=1}^ng(X_i)-\int g(x)N(x)dx}{\big(\frac{1}{n}\sum_{i=1}^ng(X_i)+\frac{c}{n}\big)\big(\int g(x)N(x)dx+\frac{c}{n}\big)}\big|^{2p}\big]\big)^{1/2}\\
 &\leq &\|gN\|_1^{-p}\big(\E\big[\big|\frac{1}{n}\sum_{i=1}^nX_i\big|^{2p}\big]\big)^{1/2}
 \big(\E\big[\big|\frac{\frac{1}{n}\sum_{i=1}^ng(X_i)-\int g(x)N(x)dx}{\frac{1}{n}\sum_{i=1}^ng(X_i)+\frac{c}{n}}\big|^{2p}\big]\big)^{1/2}\\
 &\leq&\square_{p,g,N}\big(\E\big[\big|\frac{1}{n}\sum_{i=1}^nX_i-\E[X_1]\big|^{2p}+\big|\E[ X_1]\big|^{2p}\big)^{1/2} \times \\
& & \big(\E\big[\big|\frac{\frac{1}{n}\sum_{i=1}^ng(X_i)-\int g(x)N(x)dx}{\frac{1}{n}\sum_{i=1}^ng(X_i)+\frac{c}{n}}\big|^{2p}\big]\big)^{1/2}\\
 &\leq&\square_{p,g,N}\big(\E\big[\big|\frac{\frac{1}{n}\sum_{i=1}^ng(X_i)-\int g(x)N(x)dx}{\frac{1}{n}\sum_{i=1}^ng(X_i)+\frac{c}{n}}\big|^{2p}\big]\big)^{1/2}.
\end{eqnarray*}
Now, we set for $\gamma>3p$
\[\Omega_n=\Big\{\big|\frac{1}{n}\sum_{i=1}^ng(X_i)-\int g(x)N(x)dx\big|\leq\sqrt{\frac{2\gamma\Var(g(X_1))\log n}{n}}+\frac{\gamma\|g\|_\infty\log n}{3n}\Big\}\]
(recall that Assumption~\ref{as:an} states that $\|g\|_\infty<\infty$, which also implies $\E[g(X_1)^2]<\infty$). Since $g$ is positive, the Bernstein inequality (see Section~2.2.3 of \cite{mas}) gives:
\begin{equation}\label{bernstein}
{\mathbb P}(\Omega_n)\geq 1-2n^{-\gamma}.
\end{equation}
Therefore, we bound from above the term $B_n$ by a constant $\square_{p,g,N}$ times 
{\small
\begin{eqnarray*}
& & \big(\E\big[\big|\frac{\frac{1}{n}\sum_{i=1}^ng(X_i)-\int g(x)N(x)dx}{\frac{1}{n}\sum_{i=1}^ng(X_i)+\frac{c}{n}}\big|^{2p}{\bf 1}_{\Omega_n}+\big|\frac{\frac{1}{n}\sum_{i=1}^ng(X_i)-\int g(x)N(x)dx}{\frac{1}{n}\sum_{i=1}^ng(X_i)+\frac{c}{n}}\big|^{2p}{\bf 1}_{\Omega_n^c}\big]\big)^{1/2}\\
 &\leq&\square_{p,g,N,c}n^{-p/2}+ \square_{p,g,N} \sqrt{2}(nc^{-1}\|g\|_\infty)^pn^{-\gamma/2}\\
 &\leq& \square_{p,g,N,c} n^{-p/2},
 \end{eqnarray*}
}
where we have used (\ref{rosenthal}) for the first term and (\ref{bernstein}) for the second one. This concludes the proof of the lemma.
\end{proof}
\subsubsection*{The boundedness of ${\cal L}_k^{-1}$}
\begin{lemma}\label{lk-1}
For any function $\varphi$, we have:
$$\norme{{\mathcal L}^{-1}_k(\varphi)}_{2,T} \leq \sqrt{\frac{1}{3}} \norme{\varphi}_{2,T}.$$
\end{lemma}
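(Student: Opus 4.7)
The plan is to reduce the claim to a purely discrete $\ell^2$ inequality on the coefficients $H_{i,k}(\varphi)$ and $\varphi_{i,k}$, then exploit the recursion $H_{i,k}=\tfrac{1}{4}(H_{i/2,k}+\varphi_{i/2,k})$.

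First, since ${\cal L}^{-1}_k(\varphi)$ is piecewise constant on the uniform grid with mesh $T/k$, we have
$$\|{\cal L}^{-1}_k(\varphi)\|_{2,T}^2 = \frac{T}{k}\sum_{i=0}^{k-1} H_{i,k}(\varphi)^2 =: \frac{T}{k}\, S_H.$$
On the other hand, Cauchy--Schwarz applied on each subinterval $[x_{i,k},x_{i+1,k})$ of length $T/k$ yields $\int_{x_{i,k}}^{x_{i+1,k}}\varphi^2\ge \tfrac{T}{k}\varphi_{i,k}^2$, so that
$$\|\varphi\|_{2,T}^2 \ge \frac{T}{k}\sum_{i=0}^{k-1}\varphi_{i,k}^2 =: \frac{T}{k}\, S_\varphi.$$
Hence the lemma reduces to establishing the discrete estimate $S_H\le \tfrac{1}{3}S_\varphi$.

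For this key step, I would start from the recursion (which also captures the base case $H_{0,k}=\tfrac{1}{3}\varphi_{0,k}$, once rewritten as $H_{0,k}=\tfrac14(H_{0,k}+\varphi_{0,k})$) and apply the elementary inequality $(a+b)^2\le 2(a^2+b^2)$ to get
$$H_{i,k}^2 \le \frac{1}{8}\bigl(H_{i/2,k}^2+\varphi_{i/2,k}^2\bigr),\qquad i=0,\dots,k-1,$$
where for odd $i$ the half-integer-indexed values are the averages $\tfrac12(H_{(i-1)/2,k}+H_{(i+1)/2,k})$ and $\tfrac12(\varphi_{(i-1)/2,k}+\varphi_{(i+1)/2,k})$, to which the same convexity inequality $(\tfrac{a+b}{2})^2\le \tfrac{a^2+b^2}{2}$ applies. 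Summing over $i=0,\dots,k-1$ and splitting the sum on the right-hand side according to the parity of $i$, each term $H_{j,k}^2$ (resp.\ $\varphi_{j,k}^2$) appears with total weight at most $2$: once when $i=2j$, and at most twice more (with weight $1/2$ each) when $i=2j\pm 1$. This yields
$$\sum_{i=0}^{k-1}H_{i/2,k}^2 \le 2 S_H,\qquad \sum_{i=0}^{k-1}\varphi_{i/2,k}^2\le 2 S_\varphi,$$
and therefore $S_H \le \tfrac{1}{8}(2S_H+2S_\varphi)=\tfrac{1}{4}(S_H+S_\varphi)$, from which $S_H\le \tfrac{1}{3}S_\varphi$ follows.

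The main point to be careful about is the bookkeeping for odd $i$ (half-integer indices), together with the boundary terms where $j=\lfloor k/2\rfloor$ could in principle exceed $k-1$; checking that the shifted sums remain bounded by $2 S_H$ and $2S_\varphi$ is routine but is the only place where the combinatorics of the algorithm enters. Once this is done, combining the three displayed inequalities gives $\|{\cal L}^{-1}_k(\varphi)\|_{2,T}^2\le \tfrac{T}{k}\,\tfrac{1}{3} S_\varphi \le \tfrac{1}{3}\|\varphi\|_{2,T}^2$, which is the announced bound.
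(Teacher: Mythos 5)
Your proof is correct and follows essentially the same route as the paper: both reduce to the discrete inequality $S_H\le\tfrac14(S_H+S_\varphi)$ via the recursion and the elementary convexity bounds (the paper just makes the even/odd split explicit before applying $(a+b)^2\le 2(a^2+b^2)$ and $(a+b+c+d)^2\le 4(a^2+b^2+c^2+d^2)$, whereas you do the bookkeeping by counting multiplicities), then conclude by subtraction and the same Cauchy--Schwarz step relating $\tfrac{T}{k}S_\varphi$ to $\|\varphi\|_{2,T}^2$.
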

\begin{proof}
We have:
$$ \begin{array}{lll} \int |{\mathcal L}^{-1}_k(\varphi)(x)|^2 dx &=&
                                                 \f{T}{k}\sum\limits_{i=0}^{k-1} |H_{i,k} (\varphi)|^2 = \f{T}{k} \sum\limits_{i=0}^{k-1} \biggl(\f{1}{4}(H_{\f{i}{2},k}(\varphi) + \varphi_{\f{i}{2},k})\biggr)^2 \\ \\
&=&\f{T}{k}\biggl( \sum\limits_{j=0}^{[\f{k-1}{2}]}\f{1}{16}\bigl(H_{j,k}  + \varphi_{j,k}\bigr)^2 + \sum\limits_{j=1}^{[\f{k}{2}]} \f{1}{64}(H_{j,k} + \varphi_{j,k} + H_{j-1,k} + \varphi_{j-1,k})^2 \biggr)\\ \\
&\leq&\f{T}{k} \biggl(\sum\limits_{j=0}^{[\f{k-1}{2}]} \f{1}{8}\bigl(H_{j,k}^2 + \varphi_{j,k}^2\bigr) + \sum\limits_{j=1}^{[\f{k}{2}]} \f{1}{16} (H_{j,k}^2 + \varphi_{j,k}^2 + H_{j-1,k}^2 + \varphi_{j-1,k}^2\bigr) \biggr)\\ \\
&\leq & \f{1}{4} \f{T}{k} \sum\limits_{i=0}^{k-1} \biggl(H_{j,k}^2 +\varphi_{j,k}^2\biggr)=\f{1}{4} \biggl( \int |{\mathcal L}^{-1}_k(\varphi)(x)|^2 dx +\f{T}{k}\sum\limits_{i=0}^{k-1}\varphi_{j,k}^2  \biggr). 
\end{array}
$$
At the second line, we have distinguished the $i$'s that are even and the $i$'s that are odd. At the third line, we have used the inequalities $(a+b)^2\leq 2a^2+2b^2$ and $(a+b+c+d)^2\leq 4(a^2+b^2+c^2+d^2).$ By substraction, we obtain
$$ \int |{\mathcal L}^{-1}_k(\varphi)(x)|^2 dx \leq \f{1}{3}\f{T}{k}\sum\limits_{i=0}^{k-1}\varphi_{j,k}^2.
$$
The Cauchy-Schwarz inequality gives:
$$\biggl(\int\limits_{x_{i,k}}^{x_{i+1,k}} \varphi^2(x) dx\biggr)^2 \leq \f{T}{k} \int\limits_{x_{i,k}}^{x_{i+1,k}} \varphi^2(x) dx,$$
so that
$$\f{T}{k}\sum\limits_{i=0}^{k-1}\varphi_{j,k}^2=\f{T}{k}\sum\limits_{i=0}^{k-1}\f{k^2}{T^2} \big(\int\limits_{x_{i,k}}^{x_{i+1,k}} \varphi(x) dx\big)^2\leq \int\limits_0^T \varphi^2(x) dx$$
and finally we obtain the desired result:
$$ \int |{\mathcal L}^{-1}_k(\varphi)(x)|^2 dx  \leq \f{1}{3} \int \varphi^2(x) dx.
$$
\end{proof}

\noindent{\bf Acknowledgment:}  We warmly thank Oleg Lepski for fruitful discussions that eventually led to the choice of his method for the purpose of this article.
The research of M. Hoffmann is supported by the Agence Nationale de la Recherche, Grant No. ANR-08-BLAN-0220-01.
The research of P. Reynaud-Bouret and V. Rivoirard is partly supported by the Agence Nationale de la Recherche, Grant No. ANR-09-BLAN-0128 PARCIMONIE.
The research of M. Doumic is partly supported by the Agence Nationale de la Recherche, Grant No. ANR-09-BLAN-0218 TOPPAZ.

\end{document}